\theoremstyle{plain}
\newtheorem{theo}{Theorem}
\newtheorem{lem}[theo]{Lemma}
\newtheorem{cor}[theo]{Corollary}
\newtheorem{prop}[theo]{Proposition}
\newcommand{\inv}{\operatorname{inv}}
\theoremstyle{definition}
\newtheorem{defi}{Definition}[section]
\newcommand{\e}{\operatorname{E}}
\newcommand{\fd}{\overline{\Delta}}
\newcommand{\bd}{\underline{\Delta}}
\newcommand{\id}{\operatorname{Id}}
\newcommand{\ct}{\operatorname{CT}}
\newcommand{\op}{\operatorname{OP}}
\newcommand{\sgn}{\operatorname{sgn}}
\newcommand{\sym}{{\operatorname{\mathbf{Sym}}}}
\newcommand{\asym}{{\operatorname{\mathbf{ASym}}}}
\renewcommand{\ast}{\operatorname{AST}}
\newcommand{\tast}{\operatorname{\widetilde{AST}}}
\newcommand{\ext}{\operatorname{ext}}
\renewcommand{\path}{\operatorname{Path}}
\newcommand{\lp}{\operatorname{LP}}
\numberwithin{equation}{section}
\begin{document}

\title[Alternating sign triangles]{Enumeration of alternating sign triangles using a constant term approach}

\author[Ilse Fischer]{Ilse Fischer}
\address{Ilse Fischer, Fakult\"{a}t f\"{u}r Mathematik, Universit\"{a}t Wien, Oskar-Morgenstern-Platz 1, 1090 Wien, Austria}
\email{ilse.fischer@univie.ac.at}

\thanks{The author acknowledges support from the Austrian Science Foundation FWF, START grant Y463 and SFB grant F50.}

\subjclass[2010]{05A05, 05A15, 05A19, 15B35, 82B20, 82B23}

\begin{abstract}
Alternating sign triangles (ASTs) have recently been introduced by Ayyer, Behrend and the author, and it was proven that there is the same number of ASTs with $n$ rows as there is of $n \times n$ alternating sign matrices (ASMs). We prove a conjecture by Behrend on a refined enumeration of ASTs with respect to a statistic that is shown to have the same distribution as the column of the unique $1$ in the top row of an ASM. The proof of the conjecture is based on a certain multivariate generating function of ASTs that takes the positions of the columns with sum $1$ ($1$-columns) into account. We also prove a curious identity on the cyclic rotation of the $1$-columns of ASTs. Furthermore, we discuss a relation of our multivariate generating function to a formula of Di Francesco and Zinn-Justin for the number of fully packed loop configurations associated with a given link pattern. The proofs of our results employ the author's operator formula for the number of monotone triangles with prescribed bottom row. This is opposed to the six-vertex model approach that was used by Ayyer, Behrend and the author to enumerate ASTs, and since the refined enumeration implies the unrefined enumeration, the present paper also provides an alternative proof of the enumeration of ASTs.  
\end{abstract}

\maketitle

\section{Introduction}

An \emph{alternating sign triangle} (AST) of order $n$ is a triangular array 
of the following form
$$
\begin{array}{cccccccc}
a_{1,1}  & a_{1,2} & a_{1,3} & \ldots & \ldots &  \ldots & a_{1,2n-1} \\
& a_{2,2} & a_{2,3} &\ldots & \ldots & a_{2,2n-2} & \\
& & \ldots & \ldots & \ldots & & \\
&&& a_{n,n} &&&
\end{array}
$$
with $a_{i,j} \in \{0, 1, -1\}$ such that the following conditions are satisfied.
\begin{enumerate}
\item\label{AST-1} The non-zero entries alternate in each row and each column.
\item\label{AST-2} All row sums are $1$.
\item\label{AST-3} The topmost non-zero entry of each column is $1$ (if there exists any such entry).
\end{enumerate}
ASTs have recently been introduced in \cite{EXTREMEDASASM}.

The $7$ ASTs of order $3$ are the following:
\begin{gather*}
\begin{array}{ccccc} 1 & 0 & 0 & 0 & 0 \\ & 1 & 0 & 0 & \\ && 1 &&
\end{array}, \quad
\begin{array}{ccccc} 1 & 0 & 0 & 0 & 0 \\ & 0 & 0 & 1 & \\ && 1 &&
\end{array}, \quad
\begin{array}{ccccc} 0 & 1 & 0 & 0 & 0 \\ & 0 & 0 & 1 & \\ && 1 &&
\end{array}, \quad
\begin{array}{ccccc} 0 & 0 & 0 & 1 & 0 \\ & 1 & 0 & 0 & \\ && 1 &&
\end{array}, \\
\begin{array}{ccccc} 0 & 0 & 0 & 0 & 1 \\ & 1 & 0 & 0 & \\ && 1 &&
\end{array}, \quad
\begin{array}{ccccc} 0 & 0 & 0 & 0 & 1 \\ & 0 & 0 & 1 & \\ && 1 &&
\end{array}, \quad
\begin{array}{ccccc} 0 & 0 & 1 & 0 & 0 \\ & 1 & -1 & 1 & \\ && 1 &&
\end{array}
\end{gather*}
\emph{Alternating sign matrices} (ASMs), on the other hand, were introduced more than $35$ years ago \cite{RR86,MilRobRum82,BressoudPandC} and are defined as square matrices with entries in $\{0,1,-1\}$ such that, along each row and each column, the non-zero elements alternate and add up to $1$.
It is no coincidence that there are also $7$ ASMs of order $3$ as it was shown  in \cite{EXTREMEDASASM} that, for any positive integer $n$, there is the same number of order $n$ ASMs as there is of order $n$ ASTs. Since the proof is not bijective, this result gives rise to the search for an \emph{explicit bijection} between ASTs and ASMs of a given order as well as between ASTs and the two other objects that are known to be enumerated by the same numbers, that is  \emph{totally symmetric self-complementary plane partitions} (TSSCPPs) in an $2n \times 2n \times 2n$ box \cite{MilRobRum86} and 
\emph{descending plane partitions} (DPPs) whose parts do not exceed $n$ \cite{DPPMRR}. Presumably, this task is no simpler than constructing explicit bijections between ASMs and TSSCPPs as well as ASMs and DPPs which remains to be elusive although many combinatorialists have tried in the past $35$ years (see \cite{KrattGogMagog,FonZinn, cheballah1, cheballah2, betti} and
\cite{DPP1Behrend,DPP2Behrend} 
for contributions concerning the relation between ASMs and TSSCPPs, respectively between ASMs and DPPs). 

The approach that was used in \cite{EXTREMEDASASM} to enumerate ASTs of a given order employs the six-vertex model. In the present paper, we use an alternative approach (which was developed in 
\cite{FischerNumberOfMT,FischerNewRefProof,FischerSimplifiedProof,FischerRefEnumASM,FischerASMProof2015})
to give another proof of this result and to provide refinements that currently seem to be inaccessible using the six-vertex model approach.  

The refined alternating sign matrix theorem \cite{ZeilbergerRefinedASMProof} provides a simple product formula for the number of $n \times n$ ASMs with prescribed position of the unique $1$ in the top row. Behrend formulated a conjecture \cite[Section~5.6]{EXTREMEDASASM} on a remarkable statistic on ASTs for which he claims that it has the same distribution as the position of the unique $1$ in the top row of an ASM. Next we define this statistic.

In an AST of order $n$, the total sum of entries is $n$ as all $n$ rows sum to $1$. Since every column has sum $0$ or $1$, there are precisely $n$ columns that have sum $1$. In the following, we refer to such columns as \emph{$1$-columns}. Note that the central column is always a $1$-column. Among the $1$-columns of an AST, we can distinguish between those whose bottom entry is $0$ or $1$. The first are referred to as $10$-columns, while the second are referred to as $11$-columns. For an AST $T$, the following statistic $\rho$ is of interest.
\begin{multline*}
\rho(T) = \text{\# of $11$-columns left of the central column} \\
+\text{\# of $10$-columns right of the central column}+1
\end{multline*}
For the ASTs of order $3$ in our list above, these statistics are $(3,2,1,3,2,1,2)$ in this order.
One of our main results is the proof of the next theorem.

\begin{theo} 
\label{rogerrefined}
Let $n,r$ be integers such that $1 \le r \le n$. There is the same number of $n \times n$ ASMs $A=(a_{i,j})_{1 \le i, j \le n}$ with $a_{1,r}=1$ as there is of order $n$ ASTs $T$ with $\rho(T)=r$.
\end{theo}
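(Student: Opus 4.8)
The plan is to compute the number $B_{n,r}$ of order-$n$ ASTs with $\rho(T)=r$ and to show that it agrees with the refined ASM number $A_{n,r}$, i.e.\ the number of $n\times n$ ASMs with $a_{1,r}=1$, \emph{without} exhibiting a bijection. Zeilberger's refined ASM theorem determines the $A_{n,r}$ completely, and they are characterised among positive sequences by their sum, the symmetry $A_{n,r}=A_{n,n+1-r}$, and the ratio
$$
\frac{A_{n,r+1}}{A_{n,r}}=\frac{(n-r)(n+r-1)}{r\,(2n-1-r)}.
$$
So it suffices to obtain a usable expression for $B_{n,r}$ and to match it against these relations (the order-$3$ data $B_{3,\bullet}=(2,3,2)$ already agrees with $A_{3,\bullet}$).

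To reach $B_{n,r}$ I would first encode the fine structure of an AST by a multivariate generating function that records, for each of its $n$ $1$-columns, the position of the column and whether it is a $10$- or a $11$-column; the statistic $\rho$ is then recovered as a suitable total degree once the variables on the left and right of the central column are specialised according to type. The structural input is that, reading any column from the top, the nonzero entries alternate beginning with $+1$, so its partial sums lie in $\{0,1\}$ and the column is a $1$-column exactly when its final partial sum equals $1$; tracking these partial column sums converts the array into monotone-triangle data. This is the step that makes Fischer's operator formula $\alpha(n;k_1,\dots,k_n)$ applicable: the generating function should be expressible as a product of local shift and difference operators (in the $\e$, $\fd$, $\bd$ of the $k_i$) applied to $\prod_{1\le i<j\le n}\frac{k_j-k_i}{j-i}$.

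With the generating function written in operator form, the next step is evaluation. I would specialise the bottom row to $(1,2,\dots,n)$, the specialisation that produces ASTs (respectively ASMs) of order $n$, keep the single variable that records $\rho$, and extract the coefficient of its $(r-1)$-st power to read off $B_{n,r}$ in closed form. Comparing with $A_{n,r}$ is then a verification of the displayed ratio and of the symmetry, both of which I expect to emerge from the structure of the operator expression; the ``cyclic rotation of $1$-columns'' identity flagged in the abstract is presumably a manifestation of this symmetry and can be used to shorten the extraction.

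The hard part will be the middle step: setting up the correspondence between ASTs and monotone triangles so that (i) the operator formula genuinely applies and (ii) the signed, position-dependent statistic $\rho$ --- a count of $11$-columns to the left and $10$-columns to the right of the centre --- is faithfully tracked by the generating-function variables. Getting the $10$/$11$ bookkeeping to interact correctly with the central column, and then carrying out the operator evaluation on the specialised bottom row, is where the real work lies; everything after the coefficient extraction should be routine manipulation of factorials.
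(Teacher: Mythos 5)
Your opening moves match the paper's: the paper also proceeds without a bijection, tracks the positions of the $1$-columns together with the $10$/$11$-type of each in a multivariate generating function (Theorems~\ref{ASTcor} and~\ref{constantterm1}, where a variable $t$ records $\rho$ exactly as your single variable would), and the structural input is precisely the partial-column-sum encoding you describe. But one technical point in your setup is wrong: the partial column sums of an AST do \emph{not} yield monotone triangles with bottom row $(1,2,\ldots,n)$ --- that specialisation is the ASM case. An AST yields a \emph{truncated} monotone triangle (an $(\mathbf{s},\mathbf{t})$-tree in the paper's terminology) with no full bottom row, and the plain operator formula for $\alpha(n;k_1,\ldots,k_n)$ does not apply; you need the generalised operator formula of Theorem~\ref{truncated}, in which truncation operators $(-\fd_{x_i})^{s_i}$ and $\bd_{x_i}^{t_i}$ act on $M_n(\mathbf{x})$ before evaluating at the $1$-column positions $(j_1,\ldots,j_{m-1},0,j_{m+1},\ldots,j_n)$, not at $(1,\ldots,n)$.

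The genuine gap, however, is your endgame. You assume that after coefficient extraction one obtains $B_{n,r}$ ``in closed form'' and that matching it against Zeilberger's ratio $\frac{(n-r)(n+r-1)}{r(2n-1-r)}$ is ``routine manipulation of factorials.'' No closed form for $B_{n,r}$ is obtained in the paper, and no one knows how to verify that ratio directly on the constant-term expression: summing the coefficient of $X_1^{j_1}\cdots X_{n-1}^{j_{n-1}}t^{r-1}$ in \eqref{formula2} over all $0\le j_1<\cdots<j_{n-1}\le 2n-3$ is exactly where the real work lies. The paper's route is to rewrite this sum as a constant term (Corollary~\ref{refinedCor}), apply the Stanton--Stembridge symmetrization trick, prove a new antisymmetrization identity (the Lemma giving \eqref{asym}) alongside Zeilberger's subsublemma, reduce via the Vandermonde evaluation to a sum of determinants $\det\bigl(\binom{i-1}{b_j-i+1}+t\binom{i-1}{b_j-i}\bigr)$, interpret this by Lindstr\"om--Gessel--Viennot as a lattice-path/Gelfand-Tsetlin-pattern generating function equivalent to a refined TSSCPP count, and then invoke the Fonseca--Zinn-Justin theorem identifying that refined TSSCPP statistic with the refined ASM numbers. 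So the final matching is by citation of a nontrivial external equinumeration, not by a factorial computation; without either that input or a concrete plan for proving the recurrence for $B_{n,r}$, your last step does not go through. A small additional correction: the cyclic-rotation identity (Theorem~\ref{identity}) plays no role in the proof of Theorem~\ref{rogerrefined}; it is a by-product of the rotation identity \eqref{cycle} for $M_n$, not a tool for shortening the extraction.
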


The proof of this result is based on a slight generalization (see Theorem~\ref{constantterm1}) of the following result on the refined enumeration of ASTs w.r.t.\ the positions of the $1$-columns, which is of interest in its own right.

\begin{theo} 
\label{main}
Let $n$ be a positive integer and $0 \le j_1 < j_2 < \ldots < j_{n-1} \le 2n-3$. The number $\ast(n;j_1,\ldots,j_{n-1})$ of order $n$ ASTs that have the $1$-columns in positions $j_1,j_2,\ldots,j_{n-1}$, where we count from the left starting with $0$ and disregard the central column, is the coefficient of $X_1^{j_1} X_2^{j_2} \ldots X_{n-1}^{j_{n-1}}$ in 
\begin{equation}
\label{gfun}
\prod_{i=1}^{n-1} (1+X_i) \prod_{1 \le i < j \le n-1} (1+X_i+X_i X_j)(X_j-X_i).
\end{equation}
\end{theo}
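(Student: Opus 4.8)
The plan is to set up a bijection or generating-function machinery that translates the combinatorial description of ASTs into a sum over monotone triangles, and then to evaluate that sum using the author's operator formula for the number of monotone triangles with prescribed bottom row. Concretely, I would first recall the standard dictionary between ASTs and monotone-triangle-type objects: the three defining conditions (alternating nonzero entries, row sums $1$, topmost nonzero entry being $1$) should let me encode an AST by the sequence of partial column sums read from the top, which are themselves $0/1$ sequences, so that the $1$-columns become exactly the columns whose total sum is $1$. The first step is therefore to make precise how the positions $j_1 < \cdots < j_{n-1}$ of the noncentral $1$-columns determine, and are determined by, the underlying triangular array, and to express $\ast(n;j_1,\ldots,j_{n-1})$ as a signed or unsigned count of some family of Gelfand--Tsetlin / monotone triangle patterns whose data is controlled by the $j_i$.

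\medskip

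\noindent
Second, I would bring in the operator formula. The idea is that the number of monotone triangles with bottom row $(k_1,\ldots,k_n)$ is given by applying a product of shift/difference operators to a Vandermonde-type expression, and this formula is valid as a polynomial identity in the $k_i$, so it can be summed or specialized freely. I would write $\ast(n;j_1,\ldots,j_{n-1})$ as a sum of the operator formula evaluated at bottom rows built out of the $j_i$ (together with the forced central $1$-column), and then interpret the resulting expression as extracting a coefficient. The natural bridge to (\ref{gfun}) is that the operator formula's constituent factors — typically something like $\prod (\id + E_{x_i} + E_{x_i}\Delta_{x_j})$ acting on $\prod_{i<j}(x_j - x_i)$ — produce, after passing to generating-variable form $X_i$, exactly the three products $\prod(1+X_i)$, $\prod(1+X_i+X_iX_j)$, and the Vandermonde-like $\prod(X_j-X_i)$. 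So the core of the argument is to show that the coefficient extraction "coefficient of $X_1^{j_1}\cdots X_{n-1}^{j_{n-1}}$" matches the specialization of the operator formula that counts ASTs with $1$-columns in positions $j_1,\ldots,j_{n-1}$.

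\medskip

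\noindent
The key computational step I expect to carry out is a change of viewpoint from operators acting on Laurent polynomials to coefficient extraction from the explicit product (\ref{gfun}). The author's operator formula expresses the monotone triangle count as $\prod_{1 \le i < j \le n}\left(\id + E_{x_i} + E_{x_i}\Delta_{x_j}\right)\prod_{1 \le i < j \le n}\frac{x_j - x_i}{j-i}$ evaluated at the prescribed bottom row; here $E_{x}$ is the shift operator $x \mapsto x+1$ and $\Delta_x = E_x - \id$. The plan is to translate each $E_{x_i}$ into multiplication by a generating variable $X_i$ and each difference operator accordingly, so that the operator product becomes the polynomial product in (\ref{gfun}), with the central column absorbing one of the factors and thereby explaining the reduction from $n$ to $n-1$ variables. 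I would then verify that reading off the coefficient of the monomial $X_1^{j_1}\cdots X_{n-1}^{j_{n-1}}$ in this product is precisely summing the operator formula over the configurations contributing an AST with the prescribed $1$-column positions.

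\medskip

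\noindent
The hard part will be establishing the first step rigorously: correctly identifying which monotone-triangle data (and hence which specialization of the bottom row) corresponds to an AST with $1$-columns in exactly the positions $j_1,\ldots,j_{n-1}$, and in particular accounting sign-faithfully for the alternation condition and the topmost-entry condition, which restrict the admissible patterns. A secondary obstacle is the bookkeeping needed to show that the operator-to-variable translation reproduces each of the three factors in (\ref{gfun}) exactly — especially matching the mixed factor $(1+X_i+X_iX_j)$ to the term $\id + E_{x_i} + E_{x_i}\Delta_{x_j}$ and confirming that the Vandermonde normalization and the removal of the central-column variable are handled without stray factors. Once these two identifications are in place, the theorem follows by comparing coefficients.
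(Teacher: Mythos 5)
Your high-level strategy is the paper's: partial column sums, the operator formula, and a passage from difference operators to coefficient extraction. But as written there are two concrete gaps that would stop the argument. First, the objects produced by the partial-column-sum encoding are \emph{not} monotone triangles with a prescribed bottom row, so the plain operator formula, summed or specialized over bottom rows built from the $j_i$, is not the right tool. Because the rows of an AST shrink by two while the recorded rows of $1$'s grow by one, the resulting arrays are \emph{truncated} monotone triangles (the paper's $(\mathbf{s},\mathbf{t})$-trees), in which the positions of the $1$-columns prescribe simultaneously how many entries are deleted from the outer NE- and SE-diagonals ($s_i=-j_i$, $t_i=j_i$ in the paper's symmetric indexing) and the bottom entries of those diagonals. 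The count is then given by the generalized formula (Theorem~\ref{truncated}): one applies the \emph{powers} $(-\fd_{x_i})^{-j_i}$ and $\bd_{x_i}^{j_i}$ to $M_n(\mathbf{x})$ and evaluates at the point $(j_1,\ldots,j_{m-1},0,j_{m+1},\ldots,j_n)$, so the $j_i$ enter both as operator exponents and as the evaluation point. Also, your worry about a ``signed'' count and sign-faithful treatment of the alternation condition is misplaced: the alternation and topmost-$1$ conditions make every partial column sum $0$ or $1$, so the bijection is entirely sign-free.

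Second, the translation ``replace $\e_{x_i}$ by multiplication by $X_i$'' is not a mechanism that turns the operator product into \eqref{gfun}, and the factor $\prod_i(1+X_i)$ is not the image of a factor $\id+\e_{x_i}$. The actual route (proof of Theorem~\ref{constantterm1}) is more involved: one first applies the rotation identity \eqref{cycle}, $M_n(x_1,\ldots,x_n)=(-1)^{n-1}M_n(x_2,\ldots,x_n,x_1-n)$, $m-1$ times to bring the variables left of the center into usable form; then expands $\prod_{i<j}(x_j-x_i)/(j-i)$ as a permutation sum of binomial coefficients, whereupon the $x$-difference operators acting on these binomials become \emph{shift operators in the discrete parameters} $j_i$; and only then do shifts in $j_i$ become coefficient extraction in variables $X_i$ via the constant-term identity \eqref{obs}, with the explicit product emerging from a Vandermonde determinant evaluation of the resulting generating function. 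The factor $\prod_i(1+X_i)$ arises at $t=1$ from the factors $(1-P\bd_{x_i})$ and $(1+Q\fd_{x_i})$ in \eqref{formula1} that record the $10$- versus $11$-column distinction, and the reduction from $n$ to $n-1$ variables comes from the identity $\prod_{i=1}^{m-1}(-\fd_{x_i})\prod_{i=m+1}^{n}\bd_{x_i}M_n(\mathbf{x})=M_{n-1}(x_1,\ldots,\widehat{x_m},\ldots,x_n)$ (so your intuition that the central column ``absorbs'' a variable is roughly right, but it is an operator identity, not a bookkeeping convention). Without the truncation framework, the rotation identity, and the binomial-expansion step, the bridge from the operator formula to \eqref{gfun} does not materialize.
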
 

In a sense, \eqref{gfun} is the generating function of order $n$ ASTs $T$, where the exponent of $X_i$ is the position of the $i$-th $1$-column (disregarding the central column) and the position is computed as described in the theorem. However, the coefficient of $X_1^{j_1} X_2^{j_2} \ldots X_{n-1}^{j_{n-1}}$ for non-increasing sequences $j_1,j_2,\ldots,j_{n-1}$ is not necessarily $0$ although there exists no associated AST.

Alternating sign matrices are in bijective correspondence with \emph{fully packed loop configurations} \cite{ProppFaces} (see also Figure~\ref{FPL}), and this relation provides one (of several) possibilities to assign a \emph{``Catalan object''} (in this case a \emph{link pattern}) to an ASM. In  \cite{AignerFPSAC17}, it was recently pointed out that the numbers $\ast(n;j_1,\ldots,j_{n-1})$ are only non-zero if the positions $j_1,\ldots,j_{n-1}$ lie in a certain set whose cardinality is given by the $n$-th Catalan number. Indeed, the natural combinatorial range for the parameters $j_1,j_2,\ldots,j_{n-1}$ is of course 
$$
\{(j_1,j_2,\ldots,j_{n-1}) | 0 \le j_1 < j_2 < \ldots < j_{n-1} \le 2n-3 \},
$$
which is a set of cardinality $\binom{2n-2}{n-1}$, however, $\ast(n;j_1,\ldots,j_{n-1})$ vanishes outside of the following subset: $\ast(n;j_1,\ldots,j_{n-1})$ is only non-zero if, for all $i \in \{1,2,\ldots,n-1\}$, we have  
\begin{equation}
\label{catalan}
|[n-1-i,n-2+i] \cap \{j_1,j_2,\ldots,j_{n-1}\}| \ge i,
\end{equation}
where $[a,b]=\{a,a+1,\ldots,b\}$.
In order to see this, fix an AST $T$ of order $n$ and, for each $i \in \{1,2,\ldots,n-1\}$, consider the triangular array $T_i$ that consists of the $i+1$ bottom rows of $T$. The sum of entries in $T_i$ is $i+1$ as each row sum is $1$. As each column in $T_i$ sums to $1$, $-1$ or $0$, there must be at least $i+1$ columns in $T_i$ that sum to $1$. Since each such column is also a $1$-column in $T$ and the central column of $T$ is always a $1$-column, \eqref{catalan} follows.

One of the ``intermediate'' formulas for $\ast(n;j_1,\ldots,j_{n-1})$ (see Theorem~\ref{ASTcor}) on our way to prove Theorem~\ref{main} enables us to deduce the following identity for the numbers $\ast(n;j_1,\ldots,j_{n-1})$ involving a cyclic rotation  of the parameters $j_1,j_2,\ldots,j_{n-1}$. 

\begin{theo}
\label{identity}
Suppose $n$ is a positive integer and $0 \le j_1 < j_2 < \ldots j_{n-1} \le 2n-3$ such that $j_{n-1} < 2n-3-j_{1}$. Then 
$$
\ast(n;j_1,\ldots,j_{n-1})=
 \sum_{l=2n-3-j_1}^{2n-3} (-1)^{l+1} \binom{l+1}{2n-j_1-2} \ast(n;j_2,\ldots,j_{n-1},l).
$$
\end{theo}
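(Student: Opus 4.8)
The plan is to prove the identity directly from the explicit generating function of Theorem~\ref{main}, bypassing the combinatorial interpretation. Write $P(X_1,\dots,X_{n-1})$ for the polynomial in \eqref{gfun}, and single out the first variable by factoring $P = g_1\cdot P'$, where
$$
g_1(X_1) = (1+X_1)\prod_{j=2}^{n-1}(1+X_1+X_1X_j)(X_j-X_1)
$$
collects all factors containing $X_1$ and $P'=P'(X_2,\dots,X_{n-1})$ is the analogous product over indices $\ge 2$. Since $P'$ is free of $X_1$, Theorem~\ref{main} gives $\ast(n;j_1,\dots,j_{n-1}) = [X_2^{j_2}\cdots X_{n-1}^{j_{n-1}}]\,(c_{j_1}P')$, where $c_k := [X_1^k]g_1$ is a polynomial in $X_2,\dots,X_{n-1}$. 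Relabelling the $n-1$ dummy variables of $P$ so that the last one, renamed $Y$, carries the exponent $l$ while the other $n-2$ carry $j_2,\dots,j_{n-1}$, the very same $P'$ factors out and one obtains $\ast(n;j_2,\dots,j_{n-1},l) = [X_2^{j_2}\cdots X_{n-1}^{j_{n-1}}Y^l]\,(g_2\,P')$, where
$$
g_2(Y) = (1+Y)\prod_{j=2}^{n-1}(1+X_j+X_jY)(Y-X_j).
$$
Hence it suffices to show that the polynomial $\sum_l (-1)^{l+1}\binom{l+1}{2n-j_1-2}\,[Y^l]g_2$ in $X_2,\dots,X_{n-1}$ equals $c_{j_1}$; multiplying by $P'$ and extracting $X_2^{j_2}\cdots X_{n-1}^{j_{n-1}}$ then yields the theorem.

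\textbf{The key M\"obius substitution.} The crux is that $g_1$ and $g_2$ are related by a single substitution. Replacing $X_1$ by $-1/(1+Y)$ sends the three kinds of factors to
$$
1+X_1 \mapsto \frac{Y}{1+Y},\qquad 1+X_1+X_1X_j \mapsto \frac{Y-X_j}{1+Y},\qquad X_j-X_1 \mapsto \frac{1+X_j+X_jY}{1+Y},
$$
and, collecting the $2n-3$ powers of $(1+Y)$ in the denominator, this gives
$$
g_1\!\left(\frac{-1}{1+Y}\right) = \frac{Y\,g_2(Y)}{(1+Y)^{2n-2}},\qquad\text{equivalently}\qquad g_2(Y) = \frac{(1+Y)^{2n-2}}{Y}\,g_1\!\left(\frac{-1}{1+Y}\right).
$$
As $g_1$ has degree $2n-3$ in $X_1$, writing $g_1(X_1)=\sum_{k=0}^{2n-3}c_kX_1^k$ turns this into $g_2(Y) = Y^{-1}\sum_{k}(-1)^kc_k(1+Y)^{2n-2-k}$, which is a genuine polynomial because the numerator vanishes at $Y=0$ (it equals $g_1(-1)=0$, the factor $1+X_1$ being present). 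Reading off coefficients yields the clean relation
$$
[Y^l]g_2 = \sum_{k=0}^{2n-3}(-1)^k c_k\binom{2n-2-k}{l+1}.
$$

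\textbf{The binomial identity and conclusion.} It remains to evaluate $\sum_l(-1)^{l+1}\binom{l+1}{2n-j_1-2}[Y^l]g_2 = \sum_k (-1)^kc_k\,S(k)$, where, putting $M=2n-j_1-2$, $N=2n-2$ and $p=l+1$ (so $p$ runs from $M$ to $N$),
$$
S(k) = \sum_{p=M}^{N}(-1)^p\binom{p}{M}\binom{N-k}{p}.
$$
The subset-of-a-subset identity $\binom{N-k}{p}\binom{p}{M}=\binom{N-k}{M}\binom{N-k-M}{p-M}$ followed by the binomial theorem gives $S(k)=(-1)^M\binom{N-k}{M}(1-1)^{\,N-k-M}$ when $N-k\ge M$, while $\binom{N-k}{M}=0$ forces $S(k)=0$ when $N-k<M$. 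Since $N-k-M=j_1-k$ and $N-M=j_1$, this collapses to $S(k)=(-1)^{j_1}\delta_{k,j_1}$ for every $0\le k\le 2n-3$ (the surviving term being $(-1)^M\binom{M}{M}=(-1)^{j_1}$ at $k=j_1$). Consequently $\sum_k(-1)^kc_kS(k)=c_{j_1}$, which is exactly the polynomial identity needed in the reduction, and the theorem follows.

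\textbf{Main obstacle and the role of the hypothesis.} The only non-routine ingredient is the substitution $X_1\mapsto -1/(1+Y)$ carrying the ``first-variable block'' $g_1$ onto the ``last-variable block'' $g_2$; once guessed, both its verification and the ensuing binomial identity are mechanical, so I expect discovering this substitution to be the main difficulty. I would locate it by comparing $g_1$ and $g_2$ factor by factor, noting that inside $1+X_1+X_1X_j$ the roles of the new and the shared variable get interchanged while the Vandermonde factor flips sign. Finally, the hypothesis $j_{n-1}<2n-3-j_1$ is not used in the algebra above: every index $l$ in the sum satisfies $l\ge 2n-3-j_1>j_{n-1}$, so each tuple $(j_2,\dots,j_{n-1},l)$ is strictly increasing and Theorem~\ref{main} legitimately identifies each $\ast(n;j_2,\dots,j_{n-1},l)$ on the right-hand side with a coefficient of $P$; without it the right-hand side would no longer be a sum of bona fide AST-counts.
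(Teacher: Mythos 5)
Your proof is correct, and it takes a genuinely different route from the paper's. The paper does not argue from the generating function \eqref{gfun} at all: its proof of Theorem~\ref{identity} works one level earlier, with the operator formula of Theorem~\ref{ASTcor}, applying the cyclic identity \eqref{cycle} to transport the first variable to the last slot, rewriting the resulting shift as $(1+\fd_{x_1})^{-n+j_1}\e_{x_1}$, expanding this as an infinite binomial series in $\fd_{x_1}$ whose terms are recognized as $\tast(n,m-1;j_2,\ldots,j_{n-1},l)$, and finally translating $\tast$-indexing into $\ast$-indexing, the series truncating to the stated finite range. You instead take Theorem~\ref{main} (proved in the paper just before, so there is no circularity) as a black box and establish the stronger polynomial identity $c_{j_1}=\sum_{l}(-1)^{l+1}\binom{l+1}{2n-j_1-2}[Y^l]g_2$ in $\mathbb{Z}[X_2,\ldots,X_{n-1}]$ via the substitution $X_1\mapsto -1/(1+Y)$; I verified the three factor images, the power count giving $g_1(-1/(1+Y))=Y\,g_2(Y)(1+Y)^{-2n+2}$, and the evaluation $S(k)=(-1)^{j_1}\delta_{k,j_1}$ (extending the sum over $p=l+1$ beyond $[M,N]$ costs nothing, since $\binom{p}{M}$ kills $p<M$ and $\binom{N-k}{p}$ kills $p>N\ge N-k$, and $(-1)^{M}=(-1)^{j_1}$ because $N=2n-2$ is even) — all of this is sound. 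Your M\"obius map is in effect the constant-term shadow of the paper's cyclic identity: the shift by $-n$ hidden in \eqref{cycle} becomes $X_1\mapsto -1/(1+Y)$ after passing to generating functions, so the two arguments are morally parallel, but yours avoids the difference-operator calculus entirely. It also buys more: since you prove an identity of polynomials, multiplying by $P'$ and extracting arbitrary monomials shows the relation holds for every $0\le j_1\le 2n-3$ and arbitrary integer exponents $(j_2,\ldots,j_{n-1})$ when $\ast(n;\cdot)$ is defined as a coefficient of \eqref{gfun} — precisely the extension that the paper only reports as supported by computer experiments, its own proof reaching just the intermediate class of sequences on which \eqref{formula1} and \eqref{formula2} agree. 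Your closing remark on the hypothesis $j_{n-1}<2n-3-j_1$ — used only so that Theorem~\ref{main} identifies each $\ast(n;j_2,\ldots,j_{n-1},l)$ with a genuine AST count — is likewise exactly right.
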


Since the proof of the theorem has again no combinatorial flavor (which is typical for results related to ASMs), it would certainly be of interest to find a combinatorial proof. In view of the relation of Theorem~\ref{main} to a formula for the number of fully packed loop configurations with prescribed link pattern as indicated in Section~\ref{FPLsec}, this identity might be related to the 
fact that the fully packed loop configuration numbers are invariant under the rotation of the link pattern \cite{wieland}.\footnote{By the way, this is one of the few non-trivial results related to ASMs that has a combinatorial proof.} 

Most of the paper is devoted to the proof of Theorem~\ref{main} and to then deduce Theorem~\ref{rogerrefined} from it. In addition, we also indicate how these techniques can possibly be applied to obtain similar results for other classes of ``extreme'' diagonally and antidiagonally alternating sign matrices of odd order as considered in \cite{EXTREMEDASASM}. 
%and for alternating sign trapezoids as considered in \cite{AignerFPSAC17,BehrendFischer}. 
Moreover, we review a formula that was discovered by Di Francesco and Zinn-Justin for the number of fully packed loop configurations associated with a fixed link pattern as these formulas are in terms of linear combinations of coefficients 
of monomials $X_1^{j_1} X_2^{j_2} \ldots X_{n-1}^{j_{n-1}}$ in \eqref{gfun} where the sequences $j_1,j_2,\ldots,j_{n-1}$ are now strictly \emph{decreasing}.

\section{Preliminaries: truncated monotone triangles}

Recall that a \emph{Gelfand-Tsetlin pattern} is a triangular array 
$(m_{i,j})_{1 \le j \le i \le n}$ of integers, where the elements are usually arranged as follows 
\begin{equation}
\label{indexing}
\begin{array}{ccccccccccccccccc}
  &   &   &   &   &   &   &   & m_{1,1} &   &   &   &   &   &   &   & \\
  &   &   &   &   &   &   & m_{2,1} &   & m_{2,2} &   &   &   &   &   &   & \\
  &   &   &   &   &   & \dots &   & \dots &   & \dots &   &   &   &   &   & \\
  &   &   &   &   & m_{n-2,1} &   & \dots &   & \dots &   & m_{n-2,n-2} &   &   &   &   & \\
  &   &   &   & m_{n-1,1} &   & m_{n-1,2} &  &   \dots &   & \dots   &  & m_{n-1,n-1}  &   &   &   & \\
  &   &   & m_{n,1} &   & m_{n,2} &   & m_{n,3} &   & \dots &   & \dots &   & m_{n,n} &   &   &
\end{array}
\end{equation}
such that there is a weak increase in northeast and southeast direction, i.e., 
$m_{i+1,j} \le m_{i,j} \le m_{i+1,j+1}$ for all $i,j$ with $1 \le j \le i < n$. 
A Gelfand-Tsetlin pattern in which each row is strictly increasing except for possibly the bottom row is said to be a \emph{monotone triangle}. (This definition deviates from the standard definition where also the bottom row needs to be strictly increasing.)

We define certain partial monotone triangles which we call \emph{$(\mathbf{s},\mathbf{t})$-trees}: Let $l,r,n$ be non-negative integers with $l+r \le n$. Suppose 
$\mathbf{s}=(s_1,s_2,\ldots,s_l)$, $\mathbf{t}=(t_{n-r+1},t_{n-r+2},\ldots,t_n)$ are sequences of non-negative integers, where $\mathbf{s}$ is weakly decreasing, while $\mathbf{t}$ is weakly increasing. The shape of an $(\mathbf{s},\mathbf{t})$-tree of order $n$ is obtained from the shape of a monotone triangle with $n$ rows when deleting the bottom $s_i$ entries from the $i$-th NE-diagonal for $1 \le i \le l$  (NE-diagonals are counted from the left) and the bottom $t_i$ entries from the $i$-th SE-diagonal for $n-r+1 \le i \le n$ (SE-diagonals are also counted from the left), see Figure~\ref{st_tree}. We assume in the following that 
there is no interference between the deletion of the entries in the $l$  
leftmost NE-diagonals (as prescribed by $\mathbf{s}$) with the deletion of the entries from the $r$ rightmost SE-diagonals (as prescribed by $\mathbf{t}$).

In an $(\mathbf{s},\mathbf{t})$-tree, an entry $m_{i,j}$ is said to be \emph{regular} if it has a SW neighbour $m_{i+1,j}$ and a SE neighbour $m_{i+1,j+1}$.
We require the following monotonicity properties in an 
$(\mathbf{s},\mathbf{t})$-tree: 
\begin{enumerate}
\item Each regular entry $m_{i,j}$ has to fulfill $m_{i+1,j} \le m_{i,j} \le m_{i+1,j+1}$.
\item Two adjacent regular entries $m_{i,j}, m_{i,j+1}$ in the same row have to be distinct.   
\end{enumerate}
This extends the notion of monotone triangles, as a monotone triangle of order $n$ is just an $(\mathbf{s},\mathbf{t})$-tree, where $l,r$ are any two numbers 
with $l+r \le n$, $\mathbf{s}=(\underbrace{0,\ldots,0}_{l})$ and $\mathbf{t}=(\underbrace{0,\ldots,0}_r)$.

We fix some notation that is needed in the following: We use the \emph{shift operator} $\e_x$, the \emph{forward difference} $\fd_{x}$ and the \emph{backward difference} $\bd_{x}$, which are defined as follows.
\begin{align*}
\e_x p(x) &:= p(x+1) \\
\fd_{x} &:= \e_x - \id \\
\bd_{x} &:= \id - \e_x^{-1}
\end{align*} 
The following polynomial in $\mathbf{x}=(x_1,\ldots,x_n)$ plays an important role throughout the paper.
$$
M_n(\mathbf{x}):=\prod_{1 \le p < q \le n} \left (1 + \fd_{x_q} + \fd_{x_p} \fd_{x_q} \right)
\prod_{1 \le i < j \le n} \frac{x_j-x_i}{j-i}
$$

\begin{figure}
\scalebox{0.3}{
\includegraphics{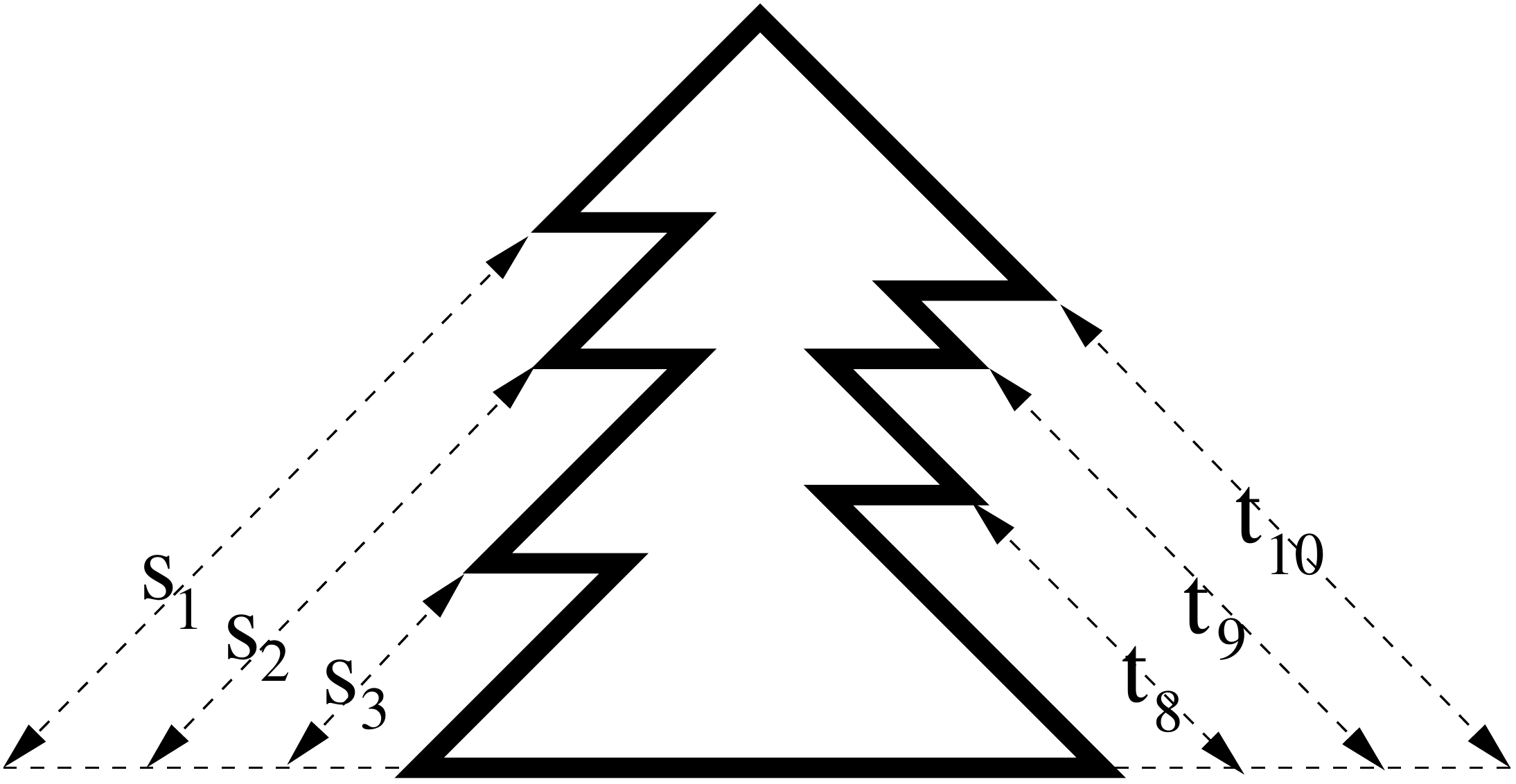}}
\caption{\label{st_tree} Shape of an $(\mathbf{s},\mathbf{t})$-tree.}
\end{figure}

The following theorem is the main result of \cite{FischerRefEnumASM} and is based on previous work in \cite{FischerNumberOfMT}.

\begin{theo} 
\label{truncated}
Let $n,l,r$ be non-negative integers with $l+r \le n$. Suppose $b_1,\ldots,b_n$ is a weakly increasing sequence of integers, and $\mathbf{s}=(s_1,s_2,\ldots,s_l)$, $\mathbf{t}=(t_{n-r+1},t_{n-r+2},\ldots,t_n)$ are a
weakly decreasing and a weakly increasing sequence of non-negative integers, respectively.
Then the evaluation of the following polynomial
\begin{equation}
\label{stoperator}
(-\fd_{x_1})^{s_1} \cdots (-\fd_{x_l})^{s_l} 
\bd_{x_{n-r+1}}^{t_{n-r+1}} 
  \cdots \bd_{x_{n}}^{t_{n}}  
 M_n(\mathbf{x})
\end{equation}
at $(x_1,\ldots,x_n)=(b_1,\ldots,b_n)$ 
is the number of $(\mathbf{s},\mathbf{t})$-trees of order $n$ with the following properties:
\begin{itemize}
\item For $1 \le i \le n-r$, the  bottom entry of the $i$-th NE-diagonal is $b_i$.
\item For $n-r+1 \le i \le n$, the bottom entry of the $i$-th SE-diagonal is $b_i$.
\end{itemize}
%Alternatively, this number is also the constant term w.r.t.\ $x_1,\ldots,x_n$ of 
%$$
%\prod_{i=1}^{l} \left( -x_i \right)^{s_i} \prod_{i=n-r+1}^{n} 
%\left( \frac{x_i}{x_i +1} \right)^{t_i} \prod_{i=1}^{n} (1+x_i)^{b_i} x_i^{-n+1}
%\prod_{1 \le i < j \le n} (x_i-x_j)(1 + x_j + x_i x_j)
%$$
\end{theo}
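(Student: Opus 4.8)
The plan is to prove the statement by induction on the total number of deleted entries $N = s_1 + \cdots + s_l + t_{n-r+1} + \cdots + t_n$, interpreting each application of $-\fd_{x_i}$ or $\bd_{x_j}$ as the removal of a single bottom entry from an $(\mathbf{s},\mathbf{t})$-tree. In the base case $N=0$ we have $\mathbf{s}=(0,\ldots,0)$ and $\mathbf{t}=(0,\ldots,0)$, the operator in \eqref{stoperator} is simply $M_n(\mathbf{x})$, and the $(\mathbf{0},\mathbf{0})$-trees of order $n$ are exactly the monotone triangles with bottom row $(b_1,\ldots,b_n)$. Hence the claim reduces to the operator formula for the number of monotone triangles with prescribed (weakly increasing) bottom row established in \cite{FischerNumberOfMT}, which anchors the induction.

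For the inductive step, I would assume the assertion for all parameter vectors with fewer than $N$ deletions and, without loss of generality, suppose that some $s_i>0$ (the case in which only the $t_j$ are positive is symmetric and produces $\bd_{x_j}$ in place of $-\fd_{x_i}$, the difference in sign and type of operator reflecting that SE-diagonal values increase rather than decrease as one moves downward). I would peel off a single entry at a \emph{corner} of the truncation profile, choosing the largest index $i$ with $s_i>0$ and setting $\mathbf{s}'=(s_1,\ldots,s_i-1,\ldots,s_l)$; this keeps $\mathbf{s}'$ weakly decreasing, so the induction hypothesis applies to $(\mathbf{s}',\mathbf{t})$. The unique entry present in an $(\mathbf{s}',\mathbf{t})$-tree but absent in the corresponding $(\mathbf{s},\mathbf{t})$-tree is the bottom entry $w$ of the $i$-th NE-diagonal; it is \emph{not} regular, since its SW-neighbour is missing, so the only monotonicity condition it carries is $w\le y$, where $y$ is its NE-neighbour and becomes the new bottom after deletion. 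Writing $P_{\mathbf{s}}(x_i)$ for the number of $(\mathbf{s},\mathbf{t})$-trees whose $i$-th NE-diagonal has bottom entry $x_i$ (all other prescribed bottom entries fixed), the combinatorial content to establish is the summation identity $P_{\mathbf{s}'}(x_i)=\sum_{y\ge x_i}P_{\mathbf{s}}(y)$, in which $x_i$ appears only as the lower summation limit and the upper range is independent of $x_i$. This inverts to $P_{\mathbf{s}}(x_i)=P_{\mathbf{s}'}(x_i)-P_{\mathbf{s}'}(x_i+1)=(-\fd_{x_i})P_{\mathbf{s}'}(x_i)$, and since NE- and SE-deletions act on disjoint variables the operators accumulate, reproducing \eqref{stoperator}.

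I expect the main obstacle to lie in the careful bookkeeping of which monotonicity and distinctness conditions are active before and after the deletion, and in confirming that the induced relation between $P_{\mathbf{s}'}$ and $P_{\mathbf{s}}$ is genuinely a summation with $x_i$ as its sole lower limit. The delicate point is that deleting the corner entry $w$ turns its former NE-neighbour $y$ from a regular into a non-regular entry: in the $(\mathbf{s}',\mathbf{t})$-tree $y$ is subject to rules (1) and (2), whereas in the $(\mathbf{s},\mathbf{t})$-tree it is a free bottom entry. Re-inserting $w=x_i$ below $y$ therefore reactivates the upper inequality and the distinctness condition governing $y$, and one must verify that, on the admissible range dictated by the weak increase $b_1\le\cdots\le b_n$ of the prescribed bottom entries, these reactivated conditions hold automatically and hence do not perturb the summation limits or introduce a spurious shift. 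The no-interference hypothesis is exactly what localises the analysis, guaranteeing that the deleted corner entry interacts only with entries of its own and the adjacent NE-diagonal and that the SE-truncation is irrelevant here; note also that rule (2) never constrains $w$ itself, since $w$ is non-regular. Granting this, $P_{\mathbf{s}}=(-\fd_{x_i})P_{\mathbf{s}'}$ holds as an identity of polynomials, the two sides agreeing for all admissible integer values and hence identically, and evaluating at $(x_1,\ldots,x_n)=(b_1,\ldots,b_n)$ closes the induction.
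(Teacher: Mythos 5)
A preliminary remark: the paper contains no proof of Theorem~\ref{truncated} at all --- it is quoted as the main result of \cite{FischerRefEnumASM}, building on \cite{FischerNumberOfMT} --- so your attempt can only be measured against the strategy of that source. Your skeleton does match its lineage: induction on the number of truncated entries, anchored in the operator formula for monotone triangles, with one factor $-\fd_{x_i}$ or $\bd_{x_i}$ per deleted entry. But the pivotal step, the summation identity $P_{\mathbf{s}'}(x_i)=\sum_{y\ge x_i}P_{\mathbf{s}}(y)$, is false as stated, and the failure is exactly at the ``delicate point'' you flag and then dismiss. Deleting the corner entry $w$ does not merely remove the constraint $w\le y$: it demotes $y$ from regular to non-regular, so the $(\mathbf{s},\mathbf{t})$-tree axioms also lose the upper inequality $y\le z$ (with $z$ the SE-neighbour of $y$) and the row-distinctness conditions on $y$. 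These reactivated conditions are \emph{not} automatic over the range your sum requires: $y$ must run over all integers $\ge x_i$, far beyond the weakly increasing window in which a chain argument like $z\ge b_{i+1}\ge y$ could apply. A minimal counterexample is $n=2$, $\mathbf{s}=(1)$, $\mathbf{t}$ empty: the $(\mathbf{s},\mathbf{t})$-tree consists of the two prescribed entries $m_{1,1}=b_1$ and $m_{2,2}=b_2$ with no conditions whatsoever, so $P_{\mathbf{s}}(y)=1$ for \emph{every} integer $y$ and your sum diverges, whereas $P_{\mathbf{s}'}(x_1)=b_2-x_1+1$; there is no ``upper range independent of $x_i$''.

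What actually survives is only the difference identity $P_{\mathbf{s}}(b)=P_{\mathbf{s}'}(b)-P_{\mathbf{s}'}(b+e_i)$ (where $b+e_i$ increments the $i$-th prescribed entry), and only at weakly increasing points with the \emph{strict} inequality $b_i<b_{i+1}$: there the reactivated conditions $y\le z$ and $y\neq m_{n-s_i,i+1}$ are indeed forced via the chain down the $(i+1)$-st NE-diagonal. At boundary points $b_i=b_{i+1}$ two things break simultaneously: the shifted argument $b+e_i$ leaves the weakly increasing range, so the induction hypothesis no longer identifies the polynomial with a count there; and the combinatorial difference acquires the spurious distinctness constraint on $y$, so it no longer equals $P_{\mathbf{s}}(b)$. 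Your closing argument --- ``the two sides agreeing for all admissible integer values and hence identically'' --- is circular at these boundary points, since the count is not a priori a polynomial function there. Handling exactly this degeneration is the hard content of the proof in \cite{FischerRefEnumASM} and the reason the method needs polynomial-extension machinery of the kind underlying identities such as \eqref{cycle}, rather than a pointwise telescoping; the phrase ``granting this'' in your proposal conceals precisely the step that cannot be granted.
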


\section{Application of Theorem~\ref{truncated} to ASTs}

We apply Theorem~\ref{truncated} to derive a first formula for the number of ASTs with prescribed positions of the $1$-columns. 

\begin{theo}
\label{ASTcor}
Let $-n+1 \le j_1 < j_2 < \ldots < j_{m-1} < 0 < j_{m+1} < \ldots < j_n \le n-1$. The generating function of ASTs of order $n$ that have the $1$-columns in positions $j_1,j_2,\ldots,j_{m-1},0,j_{m+1},\ldots,j_n$ with respect to the weight 
\begin{equation}
\label{weight}
P^{\text{$\#$ of $10$-columns left of the center}} Q^{\text{$\#$ of $10$-columns right of the center}} 
\end{equation}
is 
\begin{equation}
\label{formula1}
\left. \prod_{i=1}^{m-1} (1-P \bd_{x_i}) (-\bd_{x_i})^{-j_i-1} \prod_{i=m+1}^{n} (1+Q \fd_{x_i}) \fd_{x_{i}}^{j_{i}-1}  M_{n-1}(x_1,\ldots,\widehat{x_m},\ldots,x_n) \right|_{\mathbf{x}=0},
\end{equation}
where the indexing of the columns of ASTs of order $n$ is from left to right with $-n+1,-n+2,\ldots,n-1$. 
%Alternatively, it is also the constant term of  
%$$
%\prod_{i=1}^{m-1} \left( - \frac{x_i}{1+x_i} \right)^{-j_i} \prod_{i=m+1}^{n} 
% x_i^{j_i} \prod_{i=1}^{n} x_i^{-n+1}
%\prod_{1 \le i < j \le n} (x_i-x_j)(1 + x_j + x_i x_j).
%$$
\end{theo}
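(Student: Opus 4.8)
The plan is to set up an explicit bijection between order-$n$ ASTs and a family of monotone triangles, and then to recognise the right-hand side of \eqref{formula1} as the generating function produced by Theorem~\ref{truncated}. For an AST $T=(a_{i,j})$, I read the columns from the top and record the partial column sums $S_c(k)=\sum_{i\le k}a_{i,c}$, indexing columns by the centered range $-n+1,\dots,n-1$ and setting $a_{i,c}=0$ outside the triangle. The third defining condition~$(\ref{AST-3})$ together with the alternating property~$(\ref{AST-1})$ forces the nonzero entries of each column to read $1,-1,1,\dots$ from the top, so that $S_c(k)\in\{0,1\}$ for every $c$ and $k$. Hence $R_k:=\{c:S_c(k)=1\}$ is well defined, and since every row of $T$ sums to $1$ we get $|R_k|=\sum_{i\le k}\sum_c a_{i,c}=k$. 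Writing each $R_k$ as an increasing row, the sequence $R_1,\dots,R_n$ is an order-$n$ monotone triangle with entries in $[-n+1,n-1]$ whose bottom row lists exactly the $1$-columns $j_1,\dots,j_{m-1},0,j_{m+1},\dots,j_n$; conversely one recovers $T$ via $a_{i,c}=[c\in R_i]-[c\in R_{i-1}]$.

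Next I would isolate the constraint that the triangular shape of $T$ imposes on these monotone triangles. Since $a_{i,c}$ must vanish whenever $|c|>n-i$, the membership of a column $c$ in $R_k$ is frozen once $k>n-|c|$; equivalently, a value may enter or leave the monotone triangle only at steps $k\le n-|c|$. This freezing along a staircase is precisely what turns the order-$n$ monotone triangle into a truncated one: the forced outer entries are the deleted bottom parts of the $m-1$ leftmost NE-diagonals (attached to the negative $1$-columns $j_1,\dots,j_{m-1}$) and of the $n-m$ rightmost SE-diagonals (attached to the positive $1$-columns $j_{m+1},\dots,j_n$), while the always-present central $1$-column acts as the axis and lowers the order from $n$ to $n-1$. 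I expect the number of deleted entries on the diagonal attached to a negative column $j_i$ to be $-j_i-1$, and to $j_i-1$ on the diagonal attached to a positive column, which are exactly the exponents of the difference operators in \eqref{formula1}; the surviving bottom entries of the truncated diagonals are then pinned by the evaluation at $\mathbf{x}=0$. The hypotheses $-n+1\le j_1<\dots<j_n\le n-1$ guarantee the non-interference between the leftmost and rightmost truncated diagonals required by Theorem~\ref{truncated}.

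With the shape constraint identified as a truncation, the count of the resulting $(\mathbf{s},\mathbf{t})$-trees of order $n-1$ is delivered by Theorem~\ref{truncated} applied to $M_{n-1}(x_1,\dots,\widehat{x_m},\dots,x_n)$, with the $s_i,t_i$ read off as above. It then remains to account for the weight~\eqref{weight}. The distinction between a $10$- and an $11$-column records whether the corresponding outer column enters $R$ exactly at its last active step or strictly before it, which amounts to the bottom entry of its truncated diagonal taking one of two consecutive admissible values. Summing the two contributions, weighted by $1$ and by $P$ on the left or $Q$ on the right, collapses into the prefactors $(1-P\bd_{x_i})$ and $(1+Q\fd_{x_i})$ multiplying the truncation operators, since these operators combine the evaluations at two adjacent values with precisely those weights. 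Assembling the left factors, the right factors and $M_{n-1}$, and carrying out the left–right reflection that exchanges the NE- and SE-diagonal conventions of Theorem~\ref{truncated} (under which $\fd$ and $-\bd$ are interchanged) with those of \eqref{formula1}, should yield \eqref{formula1} exactly.

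The hard part is the middle step: proving rigorously that the freezing constraint is equivalent to the specific truncation, with the exact truncation depths $-j_i-1$ and $j_i-1$ and the correct reduction of the order from $n$ to $n-1$ via the central column. In particular one must analyse the boundary behaviour at the innermost and outermost columns, verify that the two admissible bottom-entry values for each outer column are exactly the ones distinguishing $10$- from $11$-columns so that the collapse into $(1-P\bd_{x_i})$ and $(1+Q\fd_{x_i})$ is valid (including the degenerate cases $j_i=\pm1$), and confirm that the bijection produces no spurious trees. Once this dictionary between the staircase-freezing of the monotone triangle and the $(\mathbf{s},\mathbf{t})$-tree truncation is in place, the remainder is a direct, if bookkeeping-heavy, application of Theorem~\ref{truncated}.
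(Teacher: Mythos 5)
Your opening step is exactly right, and it coincides with the paper's: the partial column sums $S_c(k)$ are $0/1$-valued, the sets $R_k$ with $|R_k|=k$ form a monotone-triangle structure whose bottom row is the set of $1$-columns, and the triangular shape freezes a column's membership after its last active row. The mechanism you propose for the weight \eqref{weight} (two cases per outer column collapsing into the prefactors $(1-P\bd_{x_i})$ and $(1+Q\fd_{x_i})$) is also the right spirit; the paper sums over the subsets of $10$-columns and packages the result via elementary symmetric functions. However, there is a genuine gap, and it sits exactly where you flag ``the hard part'': the claim that the freezing directly realizes the AST as an order-$(n-1)$ truncated tree with depths $-j_i-1$ and $j_i-1$ whose ``surviving bottom entries are pinned by the evaluation at $\mathbf{x}=0$'' is not correct as stated, so the verification you defer would fail. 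The bijection actually lands on $(\mathbf{s},\mathbf{t})$-trees of order $n$ (not $n-1$), with $\mathbf{s}=(-j_1,\ldots,-j_{m-1})$, $\mathbf{t}=(j_{m+1},\ldots,j_n)$, and with the bottom entries of the diagonals prescribed to be $j_1,\ldots,j_{m-1},0,j_{m+1},\ldots,j_n$; Theorem~\ref{truncated} then yields $\prod_{i=1}^{m-1}(-\fd_{x_i})^{-j_i}\prod_{i=m+1}^{n}\bd_{x_i}^{j_i}M_n(\mathbf{x})$ evaluated at $(j_1,\ldots,j_{m-1},0,j_{m+1},\ldots,j_n)$ --- note $M_n$, and evaluation at the column positions, not at $\mathbf{0}$. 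The $10/11$ dichotomy is likewise encoded at this level by an extra $(-\fd_{x_i})$ (one more deleted cell, same bottom entry $j_i$) versus an extra $\e_{x_i}$ (same shape, bottom entry shifted to $j_i+1$), so your picture of ``two consecutive admissible bottom-entry values at fixed truncation'' is not quite the right dictionary either: both the depth and the pinned value change between the two cases.

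The passage from that intermediate formula to \eqref{formula1} is then pure operator algebra, not a feature of the freezing: one uses $\fd_x=\e_x\bd_x$ to trade $(-\fd_{x_i})$-powers evaluated at $x_i=j_i$ for $(-\bd_{x_i})$-powers evaluated at $x_i=0$ (this, and nothing combinatorial, is what moves the evaluation point to $\mathbf{0}$), together with the identity $\prod_{i=1}^{m-1}(-\fd_{x_i})\prod_{i=m+1}^{n}\bd_{x_i}\,M_n(\mathbf{x})=M_{n-1}(x_1,\ldots,\widehat{x_m},\ldots,x_n)$ (itself a consequence of Theorem~\ref{truncated}), which absorbs one operator per outer variable and is the sole source of the exponents $-j_i-1$ and $j_i-1$ and of the drop in order from $n$ to $n-1$. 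In particular, the central column does not ``lower the order'' in the bijection --- in the intermediate formula its variable $x_m$ simply becomes inert --- and the ``left--right reflection interchanging $\fd$ and $-\bd$'' you invoke is a red herring: reconciling the operators $(-\bd_{x_i})$, $\fd_{x_i}$ at $\mathbf{x}=0$ in \eqref{formula1} with the conventions $(-\fd_{x_i})$, $\bd_{x_i}$ of Theorem~\ref{truncated} is done by shift operators, under which $(-\bd_{x_i})^{-j_i-1}\big|_{x_i=0}$ corresponds to a truncated diagonal with bottom entry $j_i+1$, not $0$. To close your gap you would have to introduce the intermediate order-$n$ formula and carry out these manipulations, i.e., essentially reconstruct the paper's proof; as written, the step you defer is both the crux of the argument and misstated.
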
 

\begin{proof} 
We perform the standard procedure to transform ASTs into
``monotone triangle structures'' (that is, we modify the bijection between ASMs and monotone triangles), which will in fact turn out to be 
$(\mathbf{s},\mathbf{t})$-trees (an example can be found at the end of the proof): replace each entry in the triangle by the partial columnsum of the entries that lie above the entry and add also the entry itself to this sum. Now record the columns of the $1$'s row by row from top to bottom: to be more precise, if we have reached row $i$---counted from the \emph{bottom} starting with $1$---and the first entry in the previous row  is $-i$ (which is the case if and only if column $-i$ is a $1$-column) then start row $i$ southeast of this entry, otherwise put the first entry of the new row southwest of the first entry in the previous row. Note that the first entry of 
row $i+1$ is $-i$ precisely if column $-i$ is a $1$-column.

A crucial 
point is the following: if the last entry in row $i+1$ is $i$ (which is precisely the case when $i$ is a $1$-column), then the last entry in row $i$ is situated southwest of the last entry of row $i+1$ and otherwise it is situated southeast. This follows from the fact that 
the row sums of ASTs are $1$.

This establishes a bijection with $(\mathbf{s},\mathbf{t})$-trees of order $n$ where
$\mathbf{s}=(-j_1,-j_2,\ldots,-j_{m-1})$ and $\mathbf{t}=(j_{m+1},\ldots,j_n)$
that have the following properties:
\begin{itemize}
\item For $1 \le i \le m-1$, the bottom entry of the $i$-th NE-diagonal is $j_{i}$.
\item For $m+1 \le i \le n$, the bottom entry of the $i$-th SE-diagonal is $j_i$.
\end{itemize}
Theorem~\ref{truncated} now implies that the number of ASTs of order $n$ with $1$-columns precisely in positions $j_1,\ldots,j_{m-1},0,j_{m+1},\ldots,j_n$ is 
$$
\left. \prod_{i=1}^{m-1} (-\fd_{x_i})^{-j_i} \prod_{i=m+1}^{n}  \bd_{x_{i}}^{j_{i}} M_n(\mathbf{x}) \right|_{(x_1,\ldots,x_n)=(j_1,\ldots,j_{m-1},0,j_{m+1},\ldots,j_n)}.
$$

It is instructive to consider the following example.
$$
\begin{array}{ccccccccccccccc}
 &   & 0 & 0 & 0 & 1 & 0 & 0 & 0 & 0 & 0 & 0 & 0 & &  \\
 &   &  & 1 & 0 & -1 & 0 & 0 & 0 & 1 & 0 & 0 & &  &  \\
 &   &  &  & 0 & 0 & 0 & 1 & 0 & -1 & 1 & &  &  &  \\
 &   &  &  &  & 1 & 0 & -1 & 0 & 1 & &  &  &  &  \\
  &   &  &  &  &  & 1 & 0 & 0 & &  &  &  &  &  \\
  &   &  &  &  &  &  & 1 & &  &  &  &  &  &  \\
  &   &  &  &  &  &  &  &  &  &  &  &  &  &
\end{array}
$$
After computing the partial columnsums, we have 
$$
\begin{array}{ccccccccccccccc}
 &   & 0 & 0 & 0 & 1 & 0 & 0 & 0 & 0 & 0 & 0 & 0 &  &  \\
 &   &  & 1 & 0 & 0 & 0 & 0 & 0 & 1 & 0 & 0 &  &  &  \\
 &   &  &  & 0 & 0 & 0 & 1 & 0 & 0 & 1 &  &  &  &  \\
 &   &  &  &  & 1 & 0 & 0 & 0 & 1 &  &  &  &  &  \\
  &   &  &  &  &  & 1 & 0 & 0 &  &  &  &  &  &  \\
  &   &  &  &  &  &  & 1 &  &  &  &  &  &  &  
\end{array}.
$$
We obtain the following $((4,2,1),(2,3))$-tree.
\begin{center}
\begin{tabular}{ccccccccccccccccc}
  &   &   &   &   &   &   &   & $-2$ &   &   &   &   &   &   &   & \\
  &   &   &   &   &   &   & $-4$ &   & $2$ &   &   &   &   &   &   & \\
  &   &   &   &   &   & $\bullet$ &   & $0$ &   & $3$ &   &   &   &   &   & \\
  &   &   &   &   & $\bullet$ &   & $-2$ &   & $2$ &   & $\bullet$ &   &   &   &   & \\
  &   &   &   & $\bullet$ &   & $\bullet$ &  &   $-1$ &   & $\bullet$   &  & $\bullet$  &   &   &   & \\
  &   &   & $\bullet$ &   & $\bullet$ &   & $\bullet$ &   & $0$ &   & $\bullet$ &   & $\bullet$ &   &   &
\end{tabular}.
\end{center}

Now suppose $L_Z \subseteq \{1,\ldots,m-1\}$ and $R_Z \subseteq \{m+1,\ldots,n\}$, then it can be seen that,
by Theorem~\ref{truncated}, the number of ASTs of order $n$ with $1$-columns in positions $j_1,\ldots,j_{m-1},0,j_{m+1},\ldots,j_n$ such that column $j_i$ is a $10$-column if and only if $i \in L_Z \cup R_Z$ is 
\begin{multline}
\label{first}
\prod_{i \in L_Z} (-\fd_{x_i}) \prod_{1 \le i \le m-1 \atop i \notin L_z} E_{x_i} 
\prod_{i \in R_Z} \bd_{x_i} \prod_{m+1 \le i \le n \atop i \not \in R_z} E_{x_i}^{-1} \\
\left. \prod_{i=1}^{m-1} (-\fd_{x_i})^{-j_i} \prod_{i=m+1}^{n}  \bd_{x_{i}}^{j_{i}} M_n(\mathbf{x}) \right|_{(x_1,\ldots,x_n)=(j_1,\ldots,j_{m-1},0,j_{m+1},\ldots,j_n)}.
\end{multline}
It also follows from Theorem~\ref{truncated} that 
$$
\prod_{i=1}^{m-1} (-\fd_{x_i}) \prod_{i=m+1}^{n} \bd_{x_i} M_n(x_1,\ldots,x_n) 
= M_{n-1}(x_1,\ldots,\widehat{x_m},\ldots,x_n)
$$
and, together with $\fd_x = E_x \bd_x$, we can conclude that \eqref{first} is equal to
$$
\left. \prod_{i \in L_z} (-\bd_{x_i}) \prod_{i \in R_Z} \fd_{x_i}
\prod_{i=1}^{m-1} (-\bd_{x_i})^{-j_i-1} \prod_{i=m+1}^{n} \fd_{x_{i}}^{j_{i}-1} M_{n-1}
(x_1,\ldots,\widehat{x_m},\ldots,x_n)) \right|_{\mathbf{x}=0}.
$$
Therefore, the number of ASTs of order $n$ that have the $1$-columns in the prescribed positions,  
$p$ $10$-columns left of the central column and $q$ $10$-columns right of the central column is equal to 
\begin{multline*}
e_p(-\bd_{x_1},\ldots,-\bd_{x_{m-1}}) e_q(\fd_{x_{m+1}},\ldots,\fd_{x_n})
\prod_{i=1}^{m-1} (-\bd_{x_i})^{-j_i-1} \prod_{i=m+1}^{n} \fd_{x_{i}}^{j_{i}-1} \\\\ \left.  M_{n-1}
(x_1,\ldots,\widehat{x_m},\ldots,x_n)) \right|_{\mathbf{x}=0},
\end{multline*}
where $e_r(X_1,\ldots,X_m)$ denotes that $p$-th elementary symmetric function in $X_1,\ldots,X_m$.
The assertion now follows as $e_r(X_1,\ldots,X_m)$ is the coefficient 
of $R^r$ in $\prod\limits_{i=1}^{m} (1+R X_i)$.
\end{proof}

\section{Application of Theorem~\ref{truncated} to odd OOSASM triangles}

In this section we indicate how the technique that was used in the proof of 
Theorem~\ref{ASTcor} can also be applied to other triangular alternating sign arrays (see for instance \cite{EXTREMEDASASM}) by considering one other type of array. We believe that it can be extended further to deal with trapezoidal alternating sign arrays as considered in  \cite{BehrendFischer,AignerFPSAC17} and this will be the subject of a forthcoming paper. 

We consider the following triangular alternating sign arrays that are in (trivial) bijective correspondence with off-diagonally and off-antidiagonally symmetric ASMs (OOSASMs) of odd order as considered in \cite{EXTREMEDASASM}. OOSASMs of even order have been introduced by Kuperberg \cite{KuperbergRoof} and are defined as diagonally and antidiagonally symmetric ASMs (DASASMs)  of even order that have only $0$'s on the diagonal and antidiagonal. As the central entry of an DASASM of odd order is always non-zero, OOSASMs of odd order are defined as DASASMs of odd order that have only $0$'s on the diagonal and antidiagonal with the exception of the central entry.

\begin{defi}
\label{OOSASM}
An \emph{odd OOSASM} triangle of order $n$ is a triangular array 
$$
\begin{array}{cccccccc}
a_{1,1}  & a_{1,2} & a_{1,3} & \ldots & \ldots &  \ldots & a_{1,2n-1} \\
& a_{2,2} & a_{2,3} &\ldots & \ldots & a_{2,2n-2} & \\
& & \ldots & \ldots & \ldots & & \\
&&& a_{n,n} &&&
\end{array}
$$
with $a_{i,j} \in \{0, 1, -1\}$ such that the following conditions are satisfied.
\begin{enumerate}
\item\label{OOSASM-1} For each $j \in \{0,1,\ldots,n-1\}$, the non-zero elements in the following sequence 
(which is a concatenation of the $j$-th column, the $(j+1)$-st row and the $(2n-j)$-th column)
$$
a_{1,j},a_{2,j},\ldots,a_{j,j},a_{j+1,j+1},a_{j+1,j+2},\ldots,a_{j+1,2n-j-1},a_{j,2n-j},a_{j-1,2n-j},\ldots,a_{1,2n-j}
$$
alternate and add up to $1$.
\item\label{OOSASM-2} In the central column, the non-zero elements alternate and the top non-zero element is 
$1$ (if such an element exists at all).
\end{enumerate}
\end{defi}
Suppose $A=(a_{i,j})_{1 \le i,j \le 2n+1}$ is an OOSASM, then the restriction $(a_{i,j})_{1 \le i \le n, i+1 \le j \le 2n+1-i}$ is an odd OOSASM triangle of order $n$ and this mapping is obviously a bijection. 

The sum of entries in the central column of an odd OOSASM triangle of order $n$ is $0$ if $n$ is even and $1$ otherwise. In order to see this note that every entry of an odd OOSASM triangle $T$ of order $n$ is contained in  precisely two sequences of the type as given in Definition~\ref{OOSASM}~(\ref{OOSASM-1}), except for those that are in the central column which are only included in one such sequence. It follows  
that twice the sum of entries in $T$ is $n+c$, where 
$c$ is the sum of entries in the central column. It also follows that there are always $\lfloor n/2 \rfloor$ $1$-columns not including the central column. We can derive a formula for the number of odd OOSASMs triangles as follows.
\begin{theo}
\begin{enumerate}
\item  Let $-2N+1 \le j_1 < j_2 < \ldots < j_{m} < 0 < j_{m+1} < \ldots < j_N \le 2N-1$. The number of odd
OOSASM triangles $T$ of order $2N$ that have the 
$1$-columns in positions $j_1,j_2,\ldots,j_N$ (where the columns are indexed from left to right with $-2N+1,-2N+2,\ldots, 2N-1$) is
\begin{multline*}
(-\bd_{x_1})^{-j_1} (-\bd_{x_2})^{-j_1-1} \ldots (-\bd_{x_{2m-1}})^{-j_{m}}  (-\bd_{x_{2m}})^{-j_{m}-1} \fd_{x_{2m+1}}^{j_{m+1}-1}  \fd_{x_{2m+2}}^{j_{m+1}} \ldots \fd_{x_{2 N -1}}^{j_N-1}  \fd_{x_{2 N}}^{j_N}\\  
\left. M_{2N}(x_1,\ldots,x_{2 N}) \right|_{\mathbf{x}=0}.
\end{multline*}
\item Let $-2N \le j_1 < j_2 < \ldots < j_{m} < 0 < j_{m+1} < \ldots < j_N \le 2N$. The number of odd
OOSASM triangles $T$ of size $2N+1$ that have $1$-columns in positions $j_1,j_2,\ldots,j_{m},0,j_{m+1},\ldots,j_N$ (again columns are indexed from left to right with $-2N, -2N+1,\ldots, 2N$) is
\begin{multline*}
(-\bd_{x_1})^{-j_1} (-\bd_{x_2})^{-j_1-1} \ldots (-\bd_{x_{2m-1}})^{-j_{m}}  (-\bd_{x_{2m}})^{-j_{m}-1} \fd_{x_{2m+2}}^{j_{m+1}-1}  \fd_{x_{2m+3}}^{j_{m+1}} \ldots \fd_{x_{2 N}}^{j_N-1}  \fd_{x_{2 N+1}}^{j_N} \\
\left. M_{2N+1}(x_1,\ldots,x_{2N+1}) \right|_{\mathbf{x}=0}.
\end{multline*}
\end{enumerate}
\end{theo}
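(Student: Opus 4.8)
The plan is to mimic the proof of Theorem~\ref{ASTcor}. First I would apply the same partial-column-sum transformation to an odd OOSASM triangle: replace each entry by the sum of the entries weakly above it in its column, and then read off, row by row from top to bottom, the positions of the $1$'s, placing each new row southeast or southwest of the previous one according to whether the extreme columns have become $1$-columns. As in the AST case, the resulting array is a partial monotone triangle, and the goal is to show that it is precisely an $(\mathbf{s},\mathbf{t})$-tree of order $2N$ (resp.\ $2N+1$) whose NE- and SE-diagonal truncations and bottom-row entries are dictated by the prescribed $1$-column positions $j_1,\ldots,j_N$. Theorem~\ref{truncated} would then turn the enumeration into an evaluation of $M_{2N}$ (resp.\ $M_{2N+1}$) acted on by the corresponding string of $\fd$- and $\bd$-operators.

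The essential difference from Theorem~\ref{ASTcor}, and the source of the doubling of variables, is that the governing constraint is no longer ``every row sums to $1$'' but the hook condition of Definition~\ref{OOSASM}(\ref{OOSASM-1}): along the concatenation of the $j$-th column, the $(j+1)$-st row and the $(2n-j)$-th column the nonzero entries alternate and sum to $1$. I would analyze how this single alternating-sum-one condition simultaneously governs the motion of the left endpoint (through its left arm, a column in the left half) and of the right endpoint (through its right arm, the antidiagonally symmetric column in the right half) of the rows of the transformed array. This is exactly what makes each $1$-column contribute two adjacent diagonal truncations rather than one: a $1$-column at position $j_i<0$ should produce the pair of NE-truncation lengths $-j_i$ and $-j_i-1$, while a $1$-column at position $j_i>0$ produces the pair of SE-truncation lengths $j_i-1$ and $j_i$, precisely the pairing of consecutive exponents appearing in the statement. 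The treatment of the centre is the analogue of the $M_n \to M_{n-1}$ reduction in the AST proof: in the even case (order $2N$) the central column sums to $0$, is not a $1$-column, and all $2N$ diagonals are truncated, so no variable is spared; in the odd case (order $2N+1$) the central column is a $1$-column, and it corresponds to the single untouched variable $x_{2m+1}$ that is skipped between the $\bd$-block and the $\fd$-block.

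Having pinned down the shape, I would record the bottom-row entries of the $(\mathbf{s},\mathbf{t})$-tree in terms of the $j_i$ and apply Theorem~\ref{truncated} to obtain the count as a string of $(-\fd)$- and $\bd$-operators applied to $M_{2N}$ (resp.\ $M_{2N+1}$) and evaluated at that bottom row. The final cosmetic step is to convert this raw expression into the stated one, evaluated at $\mathbf{x}=0$, by means of the identity $\fd_x=\e_x\bd_x$ together with the commutation of the shift operators with $M_n$ along the relevant diagonals, exactly as in the passage from \eqref{first} to the displayed formula in the proof of Theorem~\ref{ASTcor}. I would run the two parity cases in parallel, the only bookkeeping difference being the skipped central variable in the odd case.

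The hard part will be the second step: rigorously deducing the doubled $(\mathbf{s},\mathbf{t})$-tree shape from the hook conditions. Unlike the AST setting, where each row is an independent alternating sequence summing to $1$, here the two arms of a single hook lie on opposite sides of the array and interact at the antidiagonal and at the central column, so I would need to track the partial column sums through both arms at once and verify that the endpoint motion indeed yields \emph{consecutive} truncation lengths $-j_i,-j_i-1$ (resp.\ $j_i-1,j_i$) and that the two parities close up correctly at the centre. Once this bijection and its precise shape parameters are established, the remaining operator manipulation is routine and identical in spirit to the proof of Theorem~\ref{ASTcor}.
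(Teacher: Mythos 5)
Your proposal takes essentially the same route as the paper's proof: the paper likewise forms the partial column sums, records the positions of the $1$'s row by row (branching on whether column $-i-1$ is a $1$-column), obtains a bijection with $(\mathbf{s},\mathbf{t})$-trees with exactly your doubled truncation data $\mathbf{s}=(-j_1,-j_1-1,\ldots,-j_m,-j_m-1)$ and $\mathbf{t}=(j_{m+1}-1,j_{m+1},\ldots,j_N-1,j_N)$ (with the central variable untouched in the odd case), and then applies Theorem~\ref{truncated} followed by the $\fd_x=\e_x\bd_x$ shift-absorption to land at the evaluation at $\mathbf{x}=0$. The step you flag as hard---deducing the doubled tree shape from the hook conditions---is treated no more rigorously in the paper itself, which describes the construction, states the resulting shape and bottom entries, illustrates it with an example, and leaves the odd case to the reader.
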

\begin{proof}
We consider the $01$-array of partial column sums as in the proof of Theorem~\ref{ASTcor}. Row by row, we record the positions of the $1$'s, from top to bottom. Suppose we are in row $i$, counted from the bottom: if 
the sum of entries in column $-i-1$ is $1$ in the odd OOSASM triangle then we start southeast of the second element of the previous row, otherwise we start southwest of the first element. 

If $n$ is even this is a bijection between $(\mathbf{s},\mathbf{t})$-trees of order $n$ with 
$\mathbf{s}=(-j_1,-j_1-1,-j_2,-j_2-1,\ldots,-j_{m}-1,-j_m)$ and 
$\mathbf{t}=(j_{m+1}-1,j_{m+1},\ldots,j_n-1,j_n)$
with the following properties:
\begin{itemize}
\item The bottom entry is $0$.
\item For $1 \le i \le m$, the bottom entry of the $(2i-1)$-th NE-diagonal is $j_{i}$.
\item For $1 \le i \le m$, the bottom entry of the $2i$-th NE-diagonal is 
$j_{i}+1$.
\item For $m+1 \le i \le N$, the bottom entry of the $(2i-1)$-th SE-diagonal is $j_i-1$.
\item For $m+1 \le i \le N$, the bottom entry of the $2i$-th SE-diagonal is $j_i$.
\end{itemize}
Note that the bottom entries of the even NE-diagonals and the odd SE-diagonals 
in the construction have to be added. By Theorem~\ref{truncated}, the number of these objects is 
\begin{multline*}
(-\fd_{x_1})^{-j_1} (-\fd_{x_2})^{-j_1-1} \ldots (-\fd_{x_{2m-1}})^{-j_{m}} 
(-\fd_{x_{2m}})^{-j_m-1} \bd_{x_{2m+1}}^{j_{m+1}-1} \bd^{j_{m+1}}_{x_{2m+2}}\ldots 
\bd_{x_{2N-1}}^{j_N-1} \bd_{x_{2N}}^{j_N} \\
\left. 
M_{2N}(x_1,\ldots,x_{2N}) \right|_{\mathbf{x}=(j_1,j_1+1,j_2,j_2+1,\ldots,j_N-1,j_N)}.
\end{multline*}
The case $n$ is odd is left to the reader.

We consider the following example.
$$
\small
\begin{array}{ccccccccccccccccc}
&  & 0  & 0 & 1 & 0 & 0 & 0 & 0 & 0 & 0 & 0 & 0 & 0 & 0 &   & \\
& &   & 0 & 0 & 0 & 0 & 0 & 0 & 0 & 1 & 0 & 0 & 0 &  &  & \\
& &   &  & 0 & 0 & 1 & 0 & 0 & 0 & 0 & 0 & 0 &  &  &  & \\
& &   &  &  & 0  & -1 & 0 & 1 & 0 & 0 & 0 &  &  &  &  & \\
& &   &  &  &  & 1 & 0 & 0 & 0 & 0 &  &  &  &  &  & \\
&  & &  & & &  & 0 & -1 & 0 &  &  &  &  &  &  & \\
&  &   &  &  &  &  &  & 1 &  &  &  &  &  &  &  & \\
&  &   &  &  &  &  &  &  &  &  &  &  &  &  & &
\end{array}
$$
After computing the column sums, we obtain
$$
\small
\begin{array}{ccccccccccccccccc}
& & 0  & 0 & 1 & 0 & 0 & 0 & 0 & 0 & 0 & 0 & 0 & 0 & 0 &   & \\
& &   & 0 & 1 & 0 & 0 & 0 & 0 & 0 & 1 & 0 & 0 & 0 & &  & \\
& &   &  & 1 & 0 & 1 & 0 & 0 & 0 & 1 & 0 & 0 &  &  &  & \\
& &   &  &  & 0 & 0 & 0 & 1 & 0 & 1 & 0 & &  &  &  & \\
& &   &  &  &  & 1 & 0 & 1 & 0 & 1 &  &  &  &  &  & \\
&  & &  & & &  & 0 & 0 & 0 &  &  &  &  &  &  & \\
&  &   &  &  &  &  &  & 1 &  &  &  &  &  &  &  & \\
\end{array}.
$$
This is equivalent to the following $((4,3,2,1),(1,2))$-tree.
\begin{center}
\small
\begin{tabular}{ccccccccccccccccc}
  &   &   &   &   &   &   &   & $-4$ &   &   &   &   &   &   &   & \\
  &   &   &   &   &   &   & $-4$ &   & $2$ &   &   &   &   &   &   & \\
  &   &   &   &   &   & $-4$ &   & $-2$ &   & $2$ &   &   &   &   &   & \\
  &   &   &   &   & $\bullet$ &   & $(-3)$ &   & $0$ &   & $2$ &   &   &   &   & \\
  &   &   &   & $\bullet$ &   & $\bullet$ &  &   $-2$ &   & $0$   &  & $2$  &   &   &   & \\
  &   &   & $\bullet$ &   & $\bullet$ &   & $\bullet$ &   & $(-1)$ &   & $(1)$ &   & $\bullet$ &   &   & \\
  &   & $\bullet$ &   & $\bullet$ &   & $\bullet$ &   & $\bullet$ &   & $0$ &   & $\bullet$ &   &  $\bullet$  & \\
\end{tabular}
\end{center}
\end{proof}

\section{Proofs of Theorems~\ref{main} and \ref{identity}}

We return to ASTs and now perform several non-trivial manipulations to the expression in Theorem~\ref{ASTcor} to deduce the following theorem. Most important is the following identity for $M_n(x_1,\ldots,x_n)$ that was first proven in \cite{FischerNewRefProof}.
\begin{equation}
\label{cycle}
M_n(x_1,\ldots,x_n) = (-1)^{n-1} M_n(x_2,\ldots,x_n,x_1-n)
\end{equation}

\begin{theo} 
\label{constantterm1}
The number of ASTs $T$ of order $n$ that have the $1$-columns in positions 
$0 \le j_1 < j_2 < \ldots < j_{n-1} \le 2n-3$, where we exclude the central column and count from the left starting with $0$, such that $\rho(T)=r$ is the coefficient 
of $X_1^{j_1} X_2^{j_2} \ldots X_{n-1}^{j_{n-1}} t^{r-1}$ in
\begin{equation}
\label{formula2}
\prod_{i=1}^{n-1} (t+X_i) \prod_{1 \le i < j \le n-1} (1+X_i+X_i X_j)(X_j-X_i).
\end{equation}
\end{theo}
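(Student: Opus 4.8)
The plan is to start from the operator formula of Theorem~\ref{ASTcor}, re-weight it so that the single variable $t$ records $\rho$, and then turn the evaluation at $\mathbf{x}=0$ into coefficient extraction from \eqref{formula2}, with the cyclic identity \eqref{cycle} doing the decisive work. First I would rewrite the weight \eqref{weight}. Writing $m-1$ for the number of $1$-columns left of the centre, $p$ for the number of $10$-columns among them and $q$ for the number of $10$-columns on the right, the definition of $\rho$ gives $\rho(T)-1=(m-1-p)+q$, i.e.\ each $11$-column on the left and each $10$-column on the right contributes $1$. Hence in \eqref{formula1} I would arrange that a left column carries $t$ exactly when it is an $11$-column and a right column carries $t$ exactly when it is a $10$-column; concretely this replaces the factor $(1-P\bd_{x_i})$ by $(t-\bd_{x_i})$ and $(1+Q\fd_{x_i})$ by $(1+t\fd_{x_i})$, and the resulting polynomial in $t$ has the ASTs with $\rho(T)=r$ as the coefficient of $t^{r-1}$.

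Next I would pass from the indexing $-n+1,\ldots,n-1$ of Theorem~\ref{ASTcor} to the indexing $0,\ldots,2n-3$ of the present statement: a $1$-column of old position $a<0$ becomes $a+n-1\in\{0,\ldots,n-2\}$ and one of old position $a>0$ becomes $a+n-2\in\{n-1,\ldots,2n-3\}$. A key simplification is that for a strictly increasing tuple $0\le j_1<\dots<j_{n-1}\le 2n-3$ the split into left and right columns is \emph{forced}, namely $m-1=|\{i:j_i\le n-2\}|$, so no summation over $m$ is required. After relabelling, the left variables carry $(t-\bd_{x_i})(-\bd_{x_i})^{\,n-2-j_i}$ and the right variables carry $(1+t\fd_{x_i})\fd_{x_i}^{\,j_i-n+1}$ acting on $M_{n-1}$.

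The heart of the proof is to convert this operator evaluation at $\mathbf{x}=0$ into coefficient extraction. I would use that $\fd_x^{k}(\,\cdot\,)|_{x=0}$ reads off the coefficient of $\binom{x}{k}$, and the analogous backward statement on the basis $\binom{x+a-1}{a}$, to define the generating function $\sum_{\mathbf{k}}\big(\text{operator}^{\mathbf{k}}M_{n-1}|_{\mathbf 0}\big)\prod_i X_i^{k_i}$. Under this transform the building blocks of $M_{n-1}$ behave as desired: each $1+\fd_{x_q}+\fd_{x_p}\fd_{x_q}$ produces $1+X_i+X_iX_j$, the Vandermonde produces $\prod_{i<j}(X_j-X_i)$, and the weight factors $(t-\bd_{x_i})$, $(1+t\fd_{x_i})$ produce the $n-1$ factors $(t+X_i)$. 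The difficulty is that the right variables appear with forward differences and direct exponents $j_i-n+1$, while the left variables appear with backward differences and the \emph{reflected} exponents $n-2-j_i$; summing $\sum_{j_i}X_i^{j_i}(-\bd_{x_i})^{n-2-j_i}$ produces negative powers of $X_i$, i.e.\ a reflection $X_i\mapsto X_i^{-1}$. Reconciling the two sides is exactly where \eqref{cycle} enters: applied to $M_{n-1}$ it moves a left variable past the others at the cost of the controlled sign $(-1)^{n-2}$ and a shift of the argument by $n-1$, turning the reflected backward contributions into genuine positive-power contributions and placing all $n-1$ variables on the same footing.

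I expect this reconciliation to be the main obstacle: one must keep exact track of the signs coming from the $(-\bd_{x_i})$'s and from each application of \eqref{cycle}, of the argument shifts, and of the boundary correction terms that the difference-to-coefficient transforms create, and then check that everything collapses into the single product \eqref{formula2}. Since that product is only alternating under $X_i\leftrightarrow X_j$ (through its Vandermonde factor) rather than vanishing off the increasing cone, the final identity should be proved as a coefficient identity for strictly increasing tuples $j_1<\dots<j_{n-1}$ only. As consistency checks I would specialise $t=1$, which must return \eqref{gfun} and hence Theorem~\ref{main}, and verify the small case $n=3$ directly against the seven order-$3$ ASTs.
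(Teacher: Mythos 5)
Your plan is correct and takes essentially the same route as the paper's proof: it starts from Theorem~\ref{ASTcor} (your factors $(t-\bd_{x_i})$ and $(1+t\fd_{x_i})$ amount to substituting $P=t^{-1}$, $Q=t$ and multiplying by $t^{m-1}$, the bookkeeping the paper instead performs at the end via $\rho(T)=m-p+q$), then applies the cyclic identity \eqref{cycle}, converts the difference operators into shift operators on the position parameters via binomial bases, and passes to coefficient extraction exactly as in \eqref{obs}, ending with the Vandermonde evaluation and the reindexing to $0,\ldots,2n-3$. The sign, argument-shift and reflection reconciliation you identify as the main obstacle is precisely the computation the paper carries out, so your outline matches its proof step for step.
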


Theorem~\ref{main} is the special case $t=1$.

\begin{proof}
We apply \eqref{cycle} $m-1$ times to see that the expression in Corollary~\ref{ASTcor} is equal to 
\begin{multline}
\label{second}
(-1)^{(m-1)n}\prod_{i=1}^{m-1} (1-P \bd_{x_i}) (-\bd_{x_i})^{-j_i-1} \e_{x_i}^{-n+1} 
\prod_{i=m+1}^{n} (1+Q \fd_{x_i}) \fd_{x_{i}}^{j_{i}-1}  \\ 
M_{n-1}(x_{m+1},\ldots,x_n,x_1,\ldots,x_{m-1}).
\end{multline}
We use 
$$
1 + \fd_y + \fd_x \fd_y = \e_x \e_y (1 - \bd_x + \bd_x \bd_y) = \e_y 
(1+ \fd_x \bd_y)
$$
to see that the operator in the definition of $M_{n-1}(x_{m+1},\ldots,x_n,x_1,\ldots,x_{m-1})$, that is 
$$
\prod_{1 \le p<q \le m-1} (1+\fd_{x_q}+\fd_{x_p} \fd_{x_q}) 
\prod_{m+1 \le p<q \le n} (1+\fd_{x_q}+\fd_{x_p} \fd_{x_q})
\prod_{p=1}^{m-1} \prod_{q=m+1}^{n} (1+\fd_{x_p}+\fd_{x_p} \fd_{x_q}),
$$
can also be written as
$$
\prod_{p=1}^{m-1} \e_{x_p}^{n-2} 
\prod_{1 \le p<q \le m-1} (1-\bd_{x_p}+\bd_{x_p} \bd_{x_q}) 
\prod_{m+1 \le p<q \le n} (1+\fd_{x_q}+\fd_{x_p} \fd_{x_q})
\prod_{p=1}^{m-1} \prod_{q=m+1}^{n} (1+\bd_{x_p} \fd_{x_q}).
$$ 
Setting
\begin{multline*}
\op(x_1,\ldots,x_n)= \prod_{i=1}^{m-1} (1-P \bd_{x_i}) 
\prod_{i=m+1}^{n} (1+Q \fd_{x_i}) \\ 
\prod_{1 \le p<q \le m-1} (1-\bd_{x_p}+\bd_{x_p} \bd_{x_q}) 
\prod_{m+1 \le p<q \le n} (1+\fd_{x_q}+\fd_{x_p} \fd_{x_q})
\prod_{p=1}^{m-1} \prod_{q=m+1}^{n} (1+\bd_{x_p} \fd_{x_q}),
\end{multline*}
\eqref{second} can also be written as
$$
\op(x_1,\ldots,x_n) \prod_{p=1}^{m-1} \e_{x_p}^{-1}
\prod_{i=1}^{m-1} (-\bd_{x_i})^{-j_i-1}  
\prod_{i=m+1}^{n} \fd_{x_{i}}^{j_{i}-1} \prod_{1 \le i < j \le n \atop i,j \not= m} \frac{x_j-x_i}{j-i}.
$$
As 
$$
\prod_{1 \le i < j \le n \atop i,j \not= m} \frac{x_j-x_i}{j-i} =
\sum_{\sigma \in {\mathcal S}_{n-1}} \sgn \sigma \prod_{i=1}^{m-1} \binom{x_i}{\sigma(i)-1} \prod_{i=m+1}^{n} \binom{x_i}{\sigma(i-1)-1}
$$
it follows that
\begin{multline*}
 \prod_{p=1}^{m-1} \e_{x_p}^{-1}
\prod_{i=1}^{m-1} (-\bd_{x_i})^{-j_i-1}  
\prod_{i=m+1}^{n} \fd_{x_{i}}^{j_{i}-1} \prod_{1 \le i < j \le n \atop i,j \not= m} \frac{x_j-x_i}{j-i} \\
= (-1)^{j_1+\ldots+j_{m-1}+m-1} \sum_{\sigma \in {\mathcal S}_{n-1}} \sgn \sigma 
\prod_{i=1}^{m-1} \binom{x_i+j_i}{\sigma(i)+j_i} \prod_{i=m+1}^{n} \binom{x_i}{\sigma(i-1) -j_i}.
\end{multline*}
Since 
\begin{align*}
(-\bd_{x_i}) \binom{x_i+j_i}{\sigma(i)+j_i}(-1)^{j_i-1} &= \e_{j_i}^{-1} \binom{x_i+j_i}{\sigma(i)+j_i}(-1)^{j_i-1}, \\
\fd_{x_i} \binom{x_i}{\sigma(i-1)-j_i}&=\e_{j_i} \binom{x_i}{\sigma(i-1)-j_i},
\end{align*}
we may replace each $-\bd_{x_i}$ in $\op$ by $\e^{-1}_{j_i}$ and each 
$\fd_{x_i}$ by $\e_{j_i}$. Moreover, we can now evaluate at $(x_1,\ldots,x_n)=0$.
It follows that \eqref{second} is equal to 
\begin{multline}
\label{long}
\prod_{i=1}^{m-1} (1+P \e^{-1}_{j_i}) 
\prod_{i=m+1}^{n} (1+Q \e_{j_i}) \\ 
\prod_{1 \le p<q \le m-1} (1+ \e^{-1}_{j_p}+\e^{-1}_{j_p} \e^{-1}_{j_q}) 
\prod_{m+1 \le p<q \le n} (1+\e_{j_q}+\e_{j_p} \e_{j_q})
\prod_{p=1}^{m-1} \prod_{q=m+1}^{n} (1-\e^{-1}_{j_p} \e_{j_q}) \\
(-1)^{j_1+\ldots+j_{m-1}+m-1} \sum_{\sigma \in {\mathcal S}_{n-1}} \sgn \sigma 
\prod_{i=1}^{m-1} \binom{j_i}{\sigma(i)+j_i} \prod_{i=m+1}^{n} \binom{0}{\sigma(i-1) -j_i}.
\end{multline} 

Now we use the following: Suppose $f(j_1,\ldots,j_k)$ is a function such 
that the generating function 
$$F(X_1,\ldots,X_k):= \sum_{(j_1,\ldots,j_k) \in \mathbb{Z}^k} f(j_1,\ldots,j_k) X_1^{j_1} \ldots X_n^{j_k}$$
is a Laurent series and let $p(X_1,\ldots,X_k)$ be a Laurent polynomial. Then 
\begin{equation}
\label{obs}
p(\e_{j_1},\ldots,\e_{j_k}) f(j_1,\ldots,j_k) = \ct_{X_1,\ldots,X_k} X_1^{-j_1} \ldots X_k^{-j_k} p(X_1^{-1},\ldots,X_k^{-1}) F(X_1,\ldots,X_k),
\end{equation}
where $\ct_{X_1\ldots,X_{k}}$ denotes the constant term of the expression.
We apply this to \eqref{long}  with 
$$
f(j_1,\ldots,j_{m-1},j_{m+1},\ldots,j_n) = (-1)^{j_1+\ldots+j_{m-1}+m-1} \sum_{\sigma \in {\mathcal S}_{n-1}} \sgn \sigma 
\prod_{i=1}^{m-1} \binom{j_i}{\sigma(i)+j_i} \prod_{i=m+1}^{n} \binom{0}{\sigma(i-1) -j_i}.
$$
As 
$$
\sum_{j_i=-n+1}^{\infty} \binom{j_i}{\sigma(i) + j_i} (-1)^{j_i-1} X_i^{j_i} =  X_i^{-1} (-1-X_{i}^{-1})^{\sigma(i)-1}
\quad \text{and} \quad 
\sum_{j_{i+1}=0}^{\infty} \binom{0}{\sigma(i)-j_{i+1}} X_{i+1}^{j_{i+1}} = X_{i+1}^{\sigma(i)},
$$
we can conclude that 
\begin{multline*}
F(X_1,\ldots,\widehat{X_m},\ldots,X_n) = \sum_{(j_1,\ldots,\widehat{j_{m}},\ldots,j_n) \in \mathbb{Z}^{n-1}} f(j_1,\ldots,\widehat{j_{m}},\ldots,j_n) X_1^{j_1} \ldots X_{m-1}^{j_{m-1}} X_{m+1}^{j_{m+1}} \ldots X_n^{j_n} \\ = 
\prod_{i=1}^{m-1} X_i^{-1} \prod_{i=m+1}^n X_i \det_{1 \le i, j \le n-1} \left( 
\begin{cases} (-1-X_i^{-1})^{j-1}, & i=1,\ldots,m-1 \\ X_{i+1}^{j-1}, & i=m,\ldots,n-1 
\end{cases} \right).
\end{multline*}
Using the Vandermonde determinant evaluation, we  obtain 
\begin{multline*}
F(X_1,\ldots,\widehat{X_j},\ldots,X_n) = 
\prod_{i=1}^{m-1} X_i^{-m+1} \prod_{i=m+1}^n X_i \\ 
\times \prod_{1 \le i < j \le m-1} (X_j-X_i) \prod_{i=1}^{m-1} \prod_{j=m+1}^{n} (1+X_i^{-1}+X_j) 
\prod_{m+1 \le i < j \le n} (X_j-X_i).
\end{multline*}
In total, it follows that the generating function of ASTs 
w.r.t.\ $\rho$ that have the $1$-columns in 
columns $-n+1 \le  j_1 < j_2 < \ldots j_{m-1} < 0 < j_{m+1} < \ldots < j_n \le n-1$ is the coefficient of 
$X_1^{j_1} \ldots X_{m-1}^{j_{m-1}} X_{m+1}^{j_{m+1}} \ldots X_{n}^{j_{n}}$ in 
\begin{multline}
\label{before}
 \prod_{i=1}^{m-1} (1+ P X_{i}) \prod_{i=m+1}^{n} (1+Q X_{i}^{-1})  \prod_{i=1}^{m-1} X_i^{-1} \prod_{i=m+1}^{n} X_i 
\\ \times \prod_{1 \le i < j \le n \atop i,j \not= m} (1 +X_{j}^{-1} + X_{i}^{-1} X_{j}^{-1}) (X_j - X_i)
\end{multline}
For fixed AST $T$ of order $n$ with $m-1$ $1$-columns left of the central column, let be $p$ the number of $10$-columns left of the central column and $q$ be the number of $11$-columns right of the central column. Then 
$$
\rho(T) = m-p+q.
$$
This together with \eqref{before} and taking into account the different indexing of the $1$-columns implies the statement in the theorem.
\end{proof}

It follows that the number of ASTs $T$ of order $n$ with $\rho(T)=r$ is the constant term of
$$
t^{-r+1} \sum_{0 \le j_1<j_2< \ldots < j_{n-1}} \prod_{i=1}^{n-1} (t+X_i) X_i^{-j_i} \prod_{1 \le i < j \le n-1} (1+X_i+X_i X_j)(X_j-X_i).
$$
in $X_1,\ldots,X_{n-1},t$, or, equivalently, the constant term of 
$$
t^{-r+1} \sum_{0 \le j_1<j_2< \ldots < j_{n-1}} \prod_{i=1}^{n-1} (t+X_i^{-1}) X_i^{j_i} 
\prod_{1 \le i < j \le n-1} (1+X_i^{-1}+X_i^{-1} X_j^{-1})(X_j^{-1}-X_i^{-1}).
$$
This implies the following result, which is the starting point for our proof of Theorem~\ref{rogerrefined}.

\begin{cor} 
\label{refinedCor}
The number of ASTs $T$ of order $n$ with 
$\rho(T)=r$ is equal to the constant term in $X_1,\ldots,X_{n-1},t$ of 
$$
\frac{ t^{-r+1} \prod\limits_{i=1}^{n-1} (1+t X_i) X_i^{i-2n+2}  \prod\limits_{1 \le i < j \le n-1} (1+X_j+X_i X_j)(X_i-X_j)}{\prod\limits_{i=1}^{n-1} 
\left(1- \prod\limits_{j=i}^{n-1} X_j \right)}
$$ 
when interpreting this expression as a formal laurent series with $\left(1- \prod\limits_{j=i}^{n-1} X_j\right)^{-1}= \sum\limits_{k=0}^{\infty} \prod\limits_{j=i}^{n-1} X_j^k$.
\end{cor}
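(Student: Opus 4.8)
The plan is to start directly from the second constant-term expression displayed immediately before the statement, namely that the number of ASTs $T$ of order $n$ with $\rho(T)=r$ equals the constant term in $X_1,\ldots,X_{n-1},t$ of
$$
t^{-r+1} \sum_{0 \le j_1<j_2< \ldots < j_{n-1}} \prod_{i=1}^{n-1} (t+X_i^{-1}) X_i^{j_i} \prod_{1 \le i < j \le n-1} (1+X_i^{-1}+X_i^{-1} X_j^{-1})(X_j^{-1}-X_i^{-1}).
$$
The key observation is that the only dependence on the summation indices $j_1,\ldots,j_{n-1}$ is through the monomial $\prod_{i=1}^{n-1} X_i^{j_i}$; every other factor is constant with respect to the $j_i$. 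Hence I would pull those factors out of the sum and reduce the whole problem to evaluating the single geometric sum $S:=\sum_{0 \le j_1< \ldots < j_{n-1}} \prod_{i=1}^{n-1} X_i^{j_i}$ as a formal Laurent series.

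To evaluate $S$, I would first pass to weakly increasing exponents via the substitution $j_i = m_i + (i-1)$ with $0 \le m_1 \le \cdots \le m_{n-1}$, which contributes the prefactor $\prod_{i=1}^{n-1} X_i^{i-1}$. Then, writing $m_i = e_1 + \cdots + e_i$ for the consecutive differences $e_k \ge 0$, the monomial becomes $\prod_{k=1}^{n-1} \bigl(\prod_{i=k}^{n-1} X_i\bigr)^{e_k}$, and summing each $e_k \ge 0$ independently as a geometric series yields
$$
S = \prod_{i=1}^{n-1} X_i^{i-1} \; \prod_{k=1}^{n-1} \frac{1}{1 - \prod_{j=k}^{n-1} X_j}.
$$
This already produces the denominator in the corollary and, through the prefactor $\prod_i X_i^{i-1}$, part of the power of each $X_i$.

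It then remains to rewrite the factors pulled out of the sum in terms of positive powers. I would use $t+X_i^{-1} = X_i^{-1}(1+tX_i)$, and for each pair $i<j$ the identities $1+X_i^{-1}+X_i^{-1}X_j^{-1} = X_i^{-1}X_j^{-1}(1+X_j+X_iX_j)$ and $X_j^{-1}-X_i^{-1} = X_i^{-1}X_j^{-1}(X_i-X_j)$; note that the second of these automatically converts the factor $X_j-X_i$ of the starting expression into $X_i-X_j$, matching the statement. Collecting the total exponent of a fixed $X_i$ --- namely $-1$ from the first identity, $-2(n-1-i)-2(i-1)=-2n+4$ from the pairwise prefactors $X_i^{-2}X_j^{-2}$ (counting $i$ once as the smaller and once as the larger index in each pair), and $i-1$ from the prefactor of $S$ --- gives $i-2n+2$, which is exactly the exponent $X_i^{i-2n+2}$ appearing in the corollary. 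Assembling these with the numerator products $\prod_i(1+tX_i)$ and $\prod_{i<j}(1+X_j+X_iX_j)(X_i-X_j)$ and the denominator coming from $S$ reproduces the displayed rational function verbatim.

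Since the computation is essentially bookkeeping of exponents, the only genuine point requiring care --- and what I would regard as the main obstacle --- is the formal-series justification: one must check that extracting the constant term commutes with the infinite summation over strictly increasing sequences, and that the geometric expansion of $S$ agrees with the prescribed interpretation $\bigl(1-\prod_{j=i}^{n-1}X_j\bigr)^{-1} = \sum_{k \ge 0}\prod_{j=i}^{n-1} X_j^k$ stated in the corollary. Because the summand is a genuine Laurent monomial in the $X_i$ and the chosen expansion of each denominator factor is precisely the one induced by letting the differences $e_k$ range over the non-negative integers, the two interpretations coincide, and the interchange is legitimate in the ring of formal Laurent series in which these constant-term identities are understood.
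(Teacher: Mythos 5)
Your proposal is correct and takes essentially the same route as the paper, which passes from the displayed constant-term expression below Theorem~\ref{constantterm1} to Corollary~\ref{refinedCor} by exactly this summation; you have merely made explicit the steps the paper compresses into ``this implies the following result,'' namely the evaluation of $\sum_{0\le j_1<\cdots<j_{n-1}}\prod_i X_i^{j_i}$ as $\prod_i X_i^{i-1}\prod_k \bigl(1-\prod_{j=k}^{n-1}X_j\bigr)^{-1}$ via consecutive differences, the rewriting of the inverse-variable factors, and the exponent count $-1-2(n-2)+(i-1)=i-2n+2$. Your bookkeeping checks out, and your justification of interchanging the constant term with the infinite sum (only finitely many sequences can contribute to any fixed monomial, and the geometric expansion coincides with the prescribed formal-series interpretation) is sound.
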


In Theorems~\ref{ASTcor} and \ref{constantterm1}, the parameters $j_i$ index the $1$-columns of ASTs differently. In order to pass from a sequence $j_1,\ldots,\widehat{j_m},\ldots,j_{n}$ in Theorem~\ref{ASTcor} to the equivalent sequence in Theorem~\ref{constantterm1}, one has to add $n-1$ to all $j_i$ that are negative and add $n-2$ to all $j_i$ that are positive. More formally, we have 
$$
\tast(n,m;j_1,\ldots,\widehat{j_m},\ldots,j_n) = \ast(n;j_1+n-1,\ldots,j_{m-1}+n-1,j_{m+1}+n-2,\ldots,j_n+n-2),
$$
where $\tast(n,m;j_1,\ldots,\widehat{j_m},\ldots,j_n)$ denotes the quantity in the statement of Theorem~\ref{ASTcor}.

It is useful to note that the equivalence of the formulas \eqref{formula1} and \eqref{formula2}  was not only shown for (combinatorially meaningful) sequences as indicated in the theorem, but for the following more general sequences: In Theorem~\ref{ASTcor}, we may extend to sequences of integers that do not include $0$ and where all negative values appear before the positive values, while in Theorem~\ref{constantterm1} we can extend to sequences such that all values smaller than $n-1$ appear before all values greater than or equal to $n-1$. We are now in the position to prove Theorem~\ref{identity}, which is in fact true on the extended set of sequences of integers $j_1,\ldots,j_{n-1}$ that we have just described. In fact, computer experiments suggest that it is true for all integers sequences when  $\ast(n;j_1,\ldots,j_{n-1})$ is defined to be 
the coefficient of $X_1^{j_1} \ldots X_{n-1}^{j_{n-1}}$ in \eqref{gfun}.

\begin{proof}[Proof of Theorem~\ref{identity}] Suppose $m \in \{1,2,\ldots,n\}$ and $j_1,\ldots,\widehat{j_m},\ldots,j_n$ is a sequence such that $j_i$ is negative for $i<m$ and positive for $i>m$. 
Assume $m \not=1$. Then, by \eqref{cycle},
\begin{align*}
&\tast(n,m;j_1,\ldots,\widehat{j_m},\ldots,j_n) \\
&= \left. (-1)^{n+j_1+1} \e_{x_1}^{-n+j_1+1} \fd_{x_1}^{-j_1-1} \prod_{i=2}^{m-1} \e_{x_i}^{-1}  (-\bd_{x_i})^{-j_i-1} \prod_{i=m+1}^{n} \e_{x_i}  \fd_{x_{i}}^{j_{i}-1}  M_{n-1}(x_2,\ldots,\widehat{x_m},\ldots,x_n,x_1) \right|_{x=0} \\
&= \left. (-1)^{n+j_1+1} (1+\fd_{x_1})^{-n+j_1} \e_{x_1} \fd_{x_1}^{-j_1-1}
\prod_{i=2}^{m-1} \e_{x_i}^{-1}  (-\bd_{x_i})^{-j_i-1} \prod_{i=m+1}^{n} \e_{x_i}  \fd_{x_{i}}^{j_{i}-1}  M_{n-1}(x_2,\ldots,x_n,x_1) \right|_{x=0} \\
&= \left. (-1)^{n+j_1+1} \e_{x_1} \sum_{k=0}^{\infty} \binom{-n+j_1}{k} \fd_{x_1}^{k-j_1-1}
\prod_{i=2}^{m-1} \e_{x_i}^{-1}  (-\bd_{x_i})^{-j_i-1} \prod_{i=m+1}^{n} \e_{x_i}  \fd_{x_{i}}^{j_{i}-1}  M_{n-1}(x_2,\ldots,x_n,x_1) \right|_{x=0}
\\
&= \left. (-1)^{n+j_1+1} \e_{x_1} \sum_{l=-j_1}^{\infty} \binom{-n+j_1}{l+j_1} \fd_{x_1}^{l-1}
\prod_{i=2}^{m-1} \e_{x_i}^{-1}  (-\bd_{x_i})^{-j_i-1} \prod_{i=m+1}^{n} \e_{x_i}  \fd_{x_{i}}^{j_{i}-1}  M_{n-1}(x_2,\ldots,x_n,x_1) \right|_{x=0}
\end{align*}
We use $\binom{n}{k} = (-1)^{k} \binom{k-n-1}{k}$ and the symmetry of the binomial coefficient to see that this is further equal to 
\begin{align*}
& \left.   \e_{x_1} \sum_{l=-j_1}^{\infty} (-1)^{n+l+1} \binom{l+n-1}{-j_1+n-1} \fd_{x_1}^{l-1}
\prod_{i=2}^{m-1} \e_{x_i}^{-1}  (-\bd_{x_i})^{-j_i-1} \prod_{i=m+1}^{n} \e_{x_i}  \fd_{x_{i}}^{j_{i}-1}  M_{n-1}(x_2,\ldots,\widehat{x_m},\ldots,x_n,x_1) \right|_{x=0} \\
&=\sum_{l=-j_1}^{\infty} (-1)^{n+l+1} \binom{l+n-1}{-j_1+n-1} \tast(n,m-1;j_2,\ldots,\widehat{j_m},\ldots,j_n,l).
\end{align*}
It follows that 
\begin{align*}
&\ast(n;j_1,\ldots,j_{n-1})
\\
&=\sum_{l=-j_1+n-1}^{\infty} (-1)^{n+l+1} \binom{l+n-1}{2n-j_1-2} \\
& \quad \quad \quad \times\tast(n,m-1;j_2-n+1,\ldots,j_{m-1}-n+1,j_{m}-n+2,\ldots,j_{n-1}-n+2,l) \\
&= \sum_{l=2n-3-j_1}^{2n-3} (-1)^{l+1} \binom{l+1}{2n-j_1-2} \ast(n;j_2,\ldots,j_{n-1},l).
\end{align*}
\end{proof}

\section{Proof of Theorem~\ref{rogerrefined}}

We need to define the \emph{symmetrizer} $\sym$ and the \emph{antisymmetrizer} $\asym$ of a function $f(x_1,\ldots,x_n)$ with respect to the variables $x_1,\ldots,x_n$.
\begin{align*}
\sym_{x_1,\ldots,x_n} f(x_1,\ldots,x_n) &= 
\sum_{\sigma \in {\mathcal S}_n} f(x_{\sigma(1)},\ldots,x_{\sigma(n)}) \\
\asym_{x_1,\ldots,x_n} f(x_1,\ldots,x_n) &= 
\sum_{\sigma \in {\mathcal S}_n} \sgn \sigma f(x_{\sigma(1)},\ldots,x_{\sigma(n)})
\end{align*}

\begin{lem} Let $n \ge 1$. Then 
\begin{multline}
\label{asym}
\asym_{x_1,\ldots,x_n} \left[ \prod_{1 \le i < j \le n} (1+X_j+X_i X_j) \prod_{i=1}^{n} X_i^{i-1} \left(1-\prod_{j=i}^{n} X_j\right)^{-1} \right] \\
= \prod_{i=1}^{n} (1-X_i)^{-1} \prod_{1 \le i < j \le n} 
\frac{(1+X_i + X_j)(X_j-X_i)}{(1-X_i X_j)}.
\end{multline}
\end{lem}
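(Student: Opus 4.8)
The plan is to first strip away the rational factor and recognize the statement as a deformation of the classical Littlewood identity. The key preliminary observation is that, expanded as a formal power series around the origin,
$$
\prod_{i=1}^n X_i^{i-1}\left(1-\prod_{j=i}^n X_j\right)^{-1}=\sum_{0\le b_1<b_2<\cdots<b_n}\prod_{i=1}^n X_i^{b_i},
$$
which follows from the substitution $b_i=a_i+(i-1)$ that turns the weakly increasing exponent vectors $a_1\le\cdots\le a_n$ produced by the geometric series into strictly increasing ones. Writing $P(X)=\prod_{1\le i<j\le n}(1+X_j+X_iX_j)$ and $V=\prod_{i<j}(X_j-X_i)$, the left-hand side of \eqref{asym} becomes $\asym_{X_1,\ldots,X_n}\bigl[P(X)\sum_{0\le b_1<\cdots<b_n}\prod_i X_i^{b_i}\bigr]$ (the variables being antisymmetrized are the $X_i$). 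Since the right-hand side carries the Vandermonde factor $V$, both sides are antisymmetric, so it suffices to compare their expansions in the basis of alternants $a_b=\det(X_i^{b_j})$, equivalently to prove the identity obtained after dividing by $V$.

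To fix the target and supply the backbone, I would first dispose of the undeformed case $P\equiv 1$. For strictly increasing $b$ one has $\tfrac1V\asym_X[\prod_i X_i^{b_i}]=s_\lambda$, the Schur polynomial of the partition $\lambda_j=b_{n+1-j}-(n-j)$, so summing over all $0\le b_1<\cdots<b_n$ gives $\tfrac1V\asym_X\bigl[\sum_{\text{incr}}\prod_i X_i^{b_i}\bigr]=\sum_\lambda s_\lambda$. The classical Littlewood identity $\sum_\lambda s_\lambda=\prod_i(1-X_i)^{-1}\prod_{i<j}(1-X_iX_j)^{-1}$ then reproduces exactly the right-hand side of \eqref{asym} with the factor $\prod_{i<j}(1+X_i+X_j)$ deleted. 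Thus the true content of the lemma is that inserting the non-symmetric polynomial $P(X)$ inside the antisymmetrizer multiplies the outcome precisely by the symmetric factor $\prod_{1\le i<j\le n}(1+X_i+X_j)$.

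For the deformed identity itself I would clear denominators by multiplying through by the symmetric factor $\prod_i(1-X_i)\prod_{i<j}(1-X_iX_j)$: the right-hand side becomes the polynomial $\prod_{i<j}(1+X_i+X_j)(X_j-X_i)$, while the left-hand side becomes $\asym_X[P(X)\,\Phi]$, where $\Phi=\prod_i(1-X_i)\prod_{i<j}(1-X_iX_j)\sum_{0\le b_1<\cdots<b_n}\prod_i X_i^{b_i}$. Because this multiplier is symmetric it pulls out of the antisymmetrizer, and the undeformed computation of the previous paragraph gives $\asym_X\Phi=V$. It therefore remains to prove $\asym_X[P(X)\,\Phi]=\prod_{i<j}(1+X_i+X_j)\cdot V$. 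I would establish this by comparing the coefficient of each strictly increasing monomial $X_1^{b_1}\cdots X_n^{b_n}$ on both sides, that is, for each such $b$ evaluating $\sum_{\tau\in\mathcal{S}_n}\sgn\tau\,[X_1^{b_{\tau(1)}}\cdots X_n^{b_{\tau(n)}}](P\Phi)$ and matching it against the explicit polynomial; alternatively, by induction on $n$, peeling off the largest variable $X_n$ using the factorization $P(X_1,\ldots,X_n)=P(X_1,\ldots,X_{n-1})\prod_{i<n}(1+X_n+X_iX_n)$ together with the corresponding recursion for $\Phi$ and a residue in $X_n$.

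The main obstacle is exactly this deformation. Since $P$ is not symmetric it cannot be extracted from the antisymmetrizer, so there is no direct reduction to Littlewood; moreover the poles $1-\prod_{i\in S}X_i$ for subsets $S$ with $|S|\ge 3$ genuinely occur term by term in the antisymmetrizer—already at $n=3$ the tail sets $\{\sigma(i),\ldots,\sigma(n)\}$ range over all nonempty subsets—yet they are absent from the right-hand side. Showing that these spurious poles cancel after summing over $\mathcal{S}_n$, while the surviving part assembles into precisely $\prod_{i<j}(1+X_i+X_j)\cdot V$, is the heart of the matter. I expect the cleanest way to control this cancellation is to rewrite $\asym_X[P(X)\Phi]$ as a single determinant and evaluate it by Cauchy--Binet, reducing the whole statement to a Vandermonde-type determinant evaluation; the coefficient bookkeeping and the inductive residue argument are then routine.
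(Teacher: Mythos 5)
Your reductions are all correct but they never leave the starting point: rewriting the rational factor as $\sum_{0\le b_1<\cdots<b_n}\prod_i X_i^{b_i}$, checking the $P\equiv 1$ case against Littlewood, and clearing the symmetric denominator $\prod_i(1-X_i)\prod_{i<j}(1-X_iX_j)$ are each exact equivalences, so your final target $\asym_{X_1,\ldots,X_n}[P\,\Phi]=\prod_{i<j}(1+X_i+X_j)\cdot V$ is word-for-word the lemma restated. At that point you list three strategies --- coefficient matching against the explicit polynomial, an induction ``peeling off the largest variable $X_n$'' with a residue, and a determinant/Cauchy--Binet evaluation --- but none is executed, and the closing claim that the remaining bookkeeping is ``routine'' is precisely where the entire content of the lemma lives. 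Your own last paragraph correctly identifies the obstruction (the termwise poles $1-\prod_{i\in S}X_i$ with $|S|\ge 3$, which must cancel after summing over ${\mathcal S}_n$) but does not resolve it; the Littlewood baseline, while a nice sanity check, plays no role in the deformed case since $P$ cannot be pulled out of the antisymmetrizer. As it stands, this is a plan, not a proof.

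Moreover, the one concrete route you name would not go through as stated: in each term of the antisymmetrizer \emph{every} tail denominator $1-\prod_{j=i}^{n}X_{\sigma(j)}$ contains the variable in the last slot, so conditioning on $\sigma(n)$ does not decouple an $(n-1)$-variable instance of the same sum, and peeling the top exponent from $\sum_{0\le b_1<\cdots<b_n}$ produces bounded sums $b_1<\cdots<b_{n-1}\le b_n-1$ that are not of the original shape. The paper's proof instead conditions on which variable $X_k$ occupies the \emph{first} slot --- the only slot whose denominator, $1-\prod_{l=1}^{n}X_l$, is symmetric --- which yields the clean recursion $A(X_1,\ldots,X_n)=\sum_{k=1}^{n}(-1)^{k+1}\left(1-\prod_{l=1}^n X_l\right)^{-1}\prod_{l\ne k}(1+X_l+X_kX_l)X_l\,A(X_1,\ldots,\widehat{X_k},\ldots,X_n)$. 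Verifying that the right-hand side of the lemma satisfies this recursion then reduces to a genuinely nontrivial symmetric polynomial identity, which the paper proves by bounding the degree in each variable, showing both sides vanish at $X_i=-1-X_j$, and matching the evaluations at $X_i=0$ and the leading coefficients. That identity is exactly the step your proposal defers, so the gap is the heart of the argument rather than a technicality.
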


\begin{proof} Let $A(X_1,\ldots,X_n)$ denote the left-hand side of \eqref{asym}. We proceed by induction with respect to $n$ and observe that the case $n=1$ is obvious. We have the following recursion for $A(X_1,\ldots,X_n)$.
\begin{multline*}
A(X_1,\ldots,X_n) = \sum_{k=1}^{n} (-1)^{k+1} \left(1-\prod_{l=1}^{n} X_l\right)^{-1} \prod_{1 \le l \le n, l \not= k}  (1+X_l+X_k X_l) X_l \\
\times A(X_1,\ldots,\widehat{X_k},\ldots,X_n)
\end{multline*}
We need to show that the right-hand side of \eqref{asym} satisfies this recursion. A straightforward computation shows that this is equivalent to proving that 
\begin{multline*}
\prod_{1 \le i < j \le n} (1+X_i + X_j) \left(1 - \prod_{i=1}^{n} X_i \right)  \\
= \sum_{k=1}^{n} (1-X_k) \prod_{1 \le i \le n \atop i \not= k} \frac{(1+X_i+X_i X_k)(1-X_i X_k) X_i}{X_i-X_k} \prod_{1 \le i < j \le n \atop i,j \not= k} (1+X_i + X_j).
\end{multline*}
This can be seen by first observing that both sides of the identity are symmetric polynomials in $X_1,\ldots,X_n$ of degree no greater than $n$ in each $X_i$: In order to see this for the right-hand side, the expression is written as a fraction of two polynomials with $\prod_{1 \le i < j \le n} (X_j-X_i)$ in the denominator, and observing that the numerator is antisymmetric and thus divisible by the denominator. Then it can easily be shown that both sides vanish whenever $X_i=-1-X_j$, $i \not = j$ and that their evaluations at $X_i=0$ coincide as wells as the leading coefficients w.r.t.\ lexicographic term order. 
\end{proof}

We now use a trick that I have learned from \cite{ZeilbergerASMProof} where it is attributed to Stanton and Stembridge. From Corollary~\ref{refinedCor}, it follows that the generating function of order $n$ ASTs w.r.t. $\rho$ is the constant term in $X_1,\ldots,X_n$ of the following expression.
\begin{multline*}
\frac{t}{(n-1)!} \sym_{X_1,\ldots,X_{n-1}} \left[ \prod_{i=1}^{n-1} (1+t X_i) X_i^{i-2n+2} \left(1-\prod_{j=i}^{n-1} X_j \right)^{-1} \right. \\ \left. \times \prod_{1 \le i < j \le n-1} (1+X_j+X_i X_j)(X_i-X_j) \right]  \\
= \frac{t}{(n-1)!} \prod_{i=1}^{n-1} (1+t X_i) X_i^{-2n+3} \prod_{1 \le i < j \le n-1} (X_i-X_j) \\ \times \asym_{X_1,\ldots,X_{n-1}} 
\left[ \prod_{1 \le i < j \le n-1} (1+X_j+X_i X_j) \prod_{i=1}^{n-1} X_i^{i-1} \left(1-\prod_{j=i}^{n-1} X_j\right)^{-1} \right]
\end{multline*}
By Lemma~\ref{asym}, this expression is equal to 
\begin{equation}
\label{after-asym}
\frac{t}{(n-1)!}
\prod_{i=1}^{n-1} (1+t X_i) (1-X_i)^{-1} X_i^{-2n+3} \prod_{1 \le i < j \le n-1} 
\frac{(1+X_i + X_j)(X_j-X_i)(X_i-X_j)}{(1-X_i X_j)}.
\end{equation}
Now we use the following identity
$$
\asym_{X_1,\ldots,X_{n-1}} \left[ \prod_{i=1}^{n-1} X_i^{i-1} \left(1-\prod_{j=i}^{n-1} X_j\right)^{-1} \right]
= \prod_{i=1}^{n-1} (1-X_i)^{-1} \prod_{1 \le i < j \le n-1} \frac{X_j-X_i}{1-X_i X_j},
$$
which can be found in  \cite[Subsublemma 1.1.3]{ZeilbergerASMProof} (it can be proven in a similar way as the identity in Lemma~\ref{asym}). It follows that \eqref{after-asym} is equal to 
\begin{multline*}
\frac{t}{(n-1)!} \prod_{i=1}^{n-1} (1+t X_i) X_i^{-2n+3} \prod_{1 \le i < j \le n-1} 
(1+X_i+X_j)(X_i-X_j) \\ \times \asym_{X_1,\ldots,X_{n-1}} \left[ \prod_{i=1}^{n-1} X_i^{i-1} 
\left(1-\prod_{j=i}^{n-1} X_j\right)^{-1} \right] \\ =
\frac{t}{(n-1)!} \sym_{X_1,\ldots,X_{n-1}} \left[ \prod_{i=1}^{n-1} (1+t X_i) X_i^{-2n+2+i} 
\left(1-\prod_{j=i}^{n-1} X_j\right)^{-1} \prod_{1 \le i < j \le n-1} 
(1+X_i+X_j)(X_i-X_j) \right].
\end{multline*}
However, the constant term of the previous expression is equal to the constant term of 
\begin{multline*}
 t \prod_{i=1}^{n-1} (1+t X_i) X_i^{-2n+2+i} 
\left(1-\prod_{j=i}^{n-1} X_j\right)^{-1} \prod_{1 \le i < j \le n-1} 
(1+X_i+X_j)(X_i-X_j) \\
= t \sum_{0 \le b_1 < b_2 < \ldots < b_{n-1}} \prod_{i=1}^{n-1} X_i^{-2n+3+b_i} (1+t X_i) \prod_{1 \le i < j \le n-1}  (1+X_i+X_j)(X_i-X_j).
\end{multline*}
By the Vandermonde determinant evaluation, this is furthermore equal to 
$$
(-1)^{\binom{n-1}{2}} t \sum_{0 \le b_1 < b_2 < \ldots < b_{n-1}}  \det_{1 \le i, j \le n-1}
\left( (1+t X_j) (1+X_j)^{i-1} X_j^{i-2n+2+b_j} \right).
$$
By the binomial theorem, the constant term is 
\begin{multline*}
(-1)^{\binom{n-1}{2}} t \sum_{0 \le b_1 < b_2 < \ldots < b_{n-1}} \det_{1 \le i, j \le n-1} \left( \binom{i-1}{2n-2-i-b_j} + t \binom{i-1}{2n-3-i-b_j} \right) \\
= t \sum_{0 \le b_1 < b_2 < \ldots < b_{n-1}} \det_{1 \le i, j \le n-1} \left( \binom{i-1}{b_j-i+1}  + t \binom{i-1}{b_j-i} \right),
\end{multline*}
where we use the transformation $b_j \to 2n-3-b_{n-j}$.
By the Lindstr\"om-Gessel-Viennot theorem \cite{Lindstr,GesselViennot}, this is the generating function of families of non-intersecting lattice paths with step set $\{(1,0),(0,1)\}$, starting points $(i-1,-2i+1)$, $i=1,2,\ldots,n-1$, and endpoints $(b,-b)$, $b \ge 0$, where the weight of a family is $t$ to the power of the number of lattice paths that start with an east step plus $1$.  An example of such a family of non-intersecting lattice paths is given in Figure~\ref{magog}. 

\begin{figure}
\scalebox{0.3}{
\includegraphics{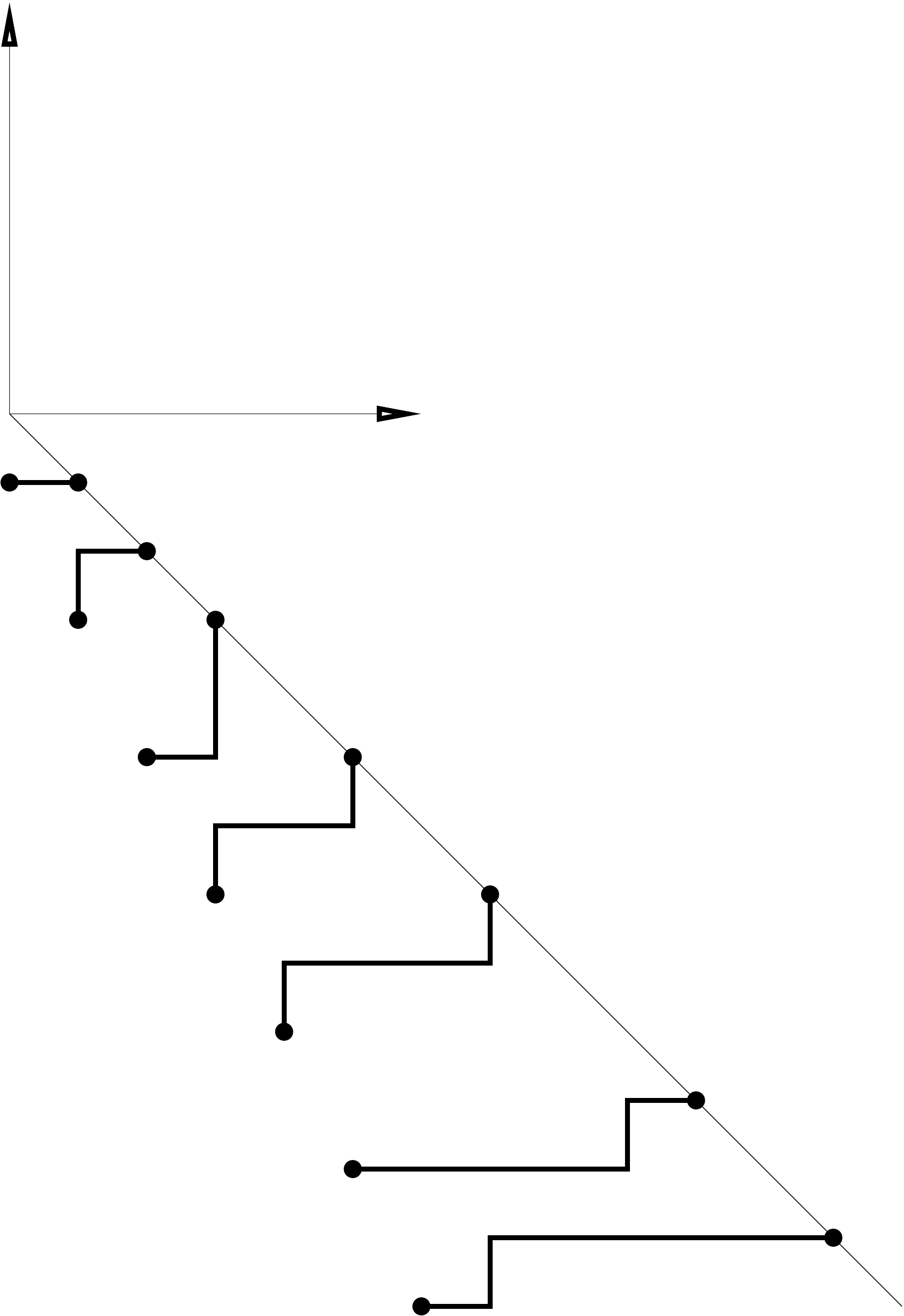}}
\caption{\label{magog} A family of non-intersecting lattice paths for $n=8$.}
\end{figure}

Such families of non-intersecting lattice paths are in bijective correspondence with Gelfand-Tsetlin patterns with $n$ rows and positive integer entries such that the entries in the $i$-th north-east diagonal, counted from the left, are bounded by $i$. The weight translates into $t$ to the power of number of entries in the rightmost south-east diagonal that are equal to the upper bound of their north-east diagonal. An example of such a Gelfand-Tsetlin pattern is given next.
$$
\begin{array}{ccccccccccccccc}
&&&&&&&1&&&&&&& \\
&&&&&&1&&2&&&&&&\\ 
&&&&&1&&1&&3&&&&& \\
&&&&1&&1&&3&&3&&&& \\
&&&1&&1&&3&&3&&4&&& \\
&&1&&1&&3&&3&&4&&6&&\\
&1&&1&&3&&3&&4&&5&&6&\\
1&&1&&2&&3&&3&&4&&5&&8
\end{array}
$$
It corresponds to the family in Figure~\ref{magog}. The $i$-th path in Figure~\ref{magog}, counted from the bottom, corresponds to the separating path (after rotation) between the entries less than or equal to $i$ and the entries greater than $i$ in the corresponding Gelfand-Tsetlin pattern.  It is not difficult to see that such Gelfand-Tsetlin patterns are in bijective correspondence to $2n \times 2n \times 2n$ TSSCPPs \cite{MilRobRum86}. Theorem~\ref{rogerrefined} follows now from the main result in \cite{FonZinn}.

\section{Fully packed loop configurations associated with a fixed link pattern}
\label{FPLsec}

In the final section we point out a relation to a formula for the number of fully packed loop configurations (FPLs) with a prescribed link pattern.
FPLs are defined to be subgraphs of the $n \times n$ square grid with $n$ external edges on each side such that every internal vertex is of degree $2$ and every other external edge is contained in the subgraph. They are in easy bijective correspondence with ASMs \cite{ProppFaces}, see Figure~\ref{FPL} for an example where also the corresponding six-vertex configuration is displayed (as the six-vertex configuration can be seen as an intermediate object when passing from ASMs to FPLs). 

\begin{figure}[ht]
\begin{minipage}{0.33\linewidth}
\centering
\mbox{$\left(\begin{matrix}
0 & 0  & 1 & 0 & 0 \\
1 & 0 & -1 &  1 & 0 \\
0 & 1 & 0 &  -1  & 1 \\
0 & 0 & 1 & 0 & 0 \\
0 & 0 & 0 & 1 & 0
\end{matrix}
\right)$}
\end{minipage}
\begin{minipage}{0.30\linewidth}
\centering
\mbox{\scalebox{0.5}{\includegraphics[width=\textwidth]{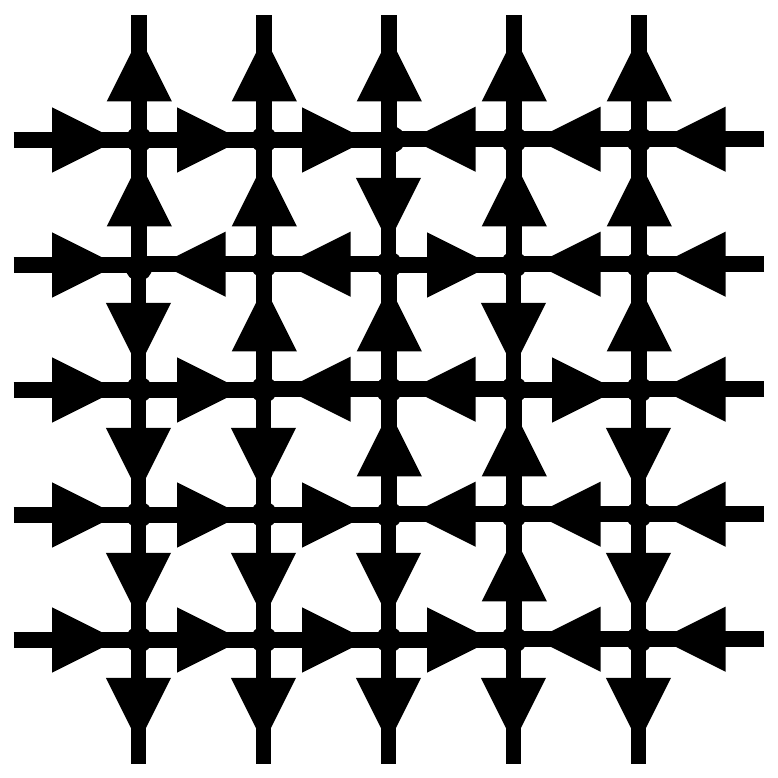}}}
\end{minipage}
\begin{minipage}{0.30\linewidth}
\centering
\mbox{\scalebox{0.5}{\includegraphics[width=\textwidth]{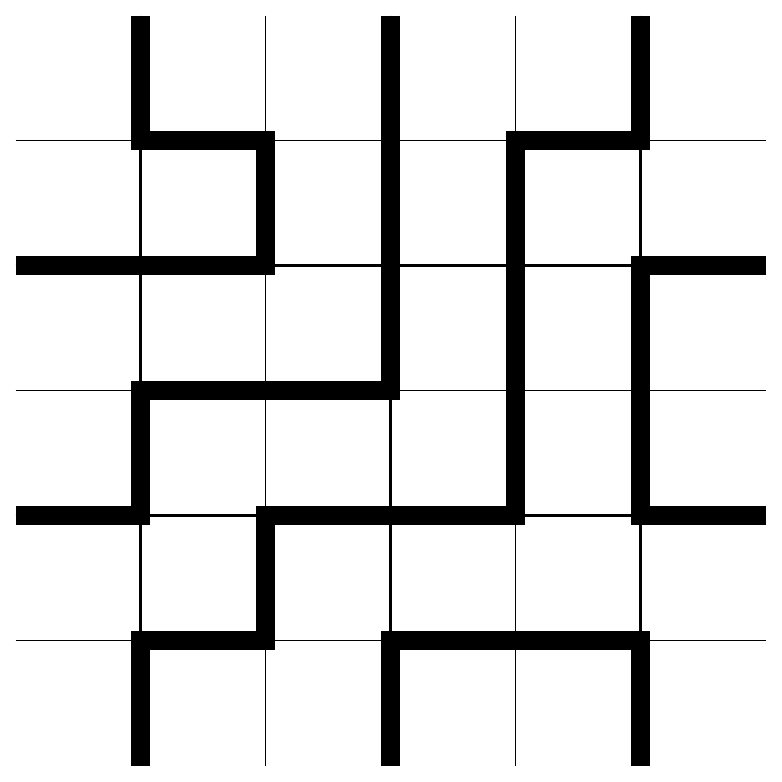}}}
\end{minipage}
\caption{\label{FPL} ASM, $6$-vertex configuration and FPL.}
\end{figure}

The following set is in natural bijection with Dyck words of length $2n$:
$${\mathbf A}_n= \{\alpha=(\alpha_1,\ldots,\alpha_{n-1}) \in \mathbb{Z}^n | 0 \le \alpha_1 < \ldots < \alpha_{n-1} \le 2n-3 \text{ and } \alpha_i \le 2i-1\}$$
Indeed, for a given Dyck word $\mathbf{w}$ of length $2n$, $\alpha_i$ is two less than the position of the $(i+1)$-st $0$ in $\mathbf{w}$ and we write $\alpha(\mathbf{w})$ for the element of $\mathbf{A}_n$ corresponding to $\mathbf{w}$. We let $\Psi_{\alpha}$ denote the coefficient of $X_1^{\alpha_1} \ldots X_{n-1}^{\alpha_{n-1}}$:
\begin{equation}
\label{psi}
 \prod_{i=1}^{n-1} (1+X_i) \prod_{1 \le i < j \le n-1} (1+X_j+X_i X_j)(X_j-X_i)
\end{equation}
This coefficient is the coefficient of $X_1^{2n-3-\alpha_{n-1}} X_2^{2n-3-\alpha_{n-2}}  \ldots X_{n-1}^{2n-3-\alpha_1}$ 
in the generating function in Theorem~\ref{main}. 

Di Francesco and Zinn-Justin \cite{sumrule,QKZ,HabilZinnJustin} (see also 
\cite{Romik2014}) have developed a beautiful theory which implies that the number of fully packed loop configurations associated with a fixed link pattern $\pi$, denoted by $\phi_{\pi}$, can be expressed as a certain linear combination of $\Psi_{\alpha}$'s. 

In order to give the precise statement, we need to define a matrix $C^{\ext}(n)$ whose columns are indexed by link patterns on $\{1,2,\ldots, 2 n\}$ and whose rows are indexed by $01$-words of length $2n$. To fix notation, we let $\mathbf{w}[i,j]$ denote the subword of a given word
$\mathbf{w}$ consisting of the letters in positions $i, i+1,\ldots, j$.
Now suppose $\pi$ is a link pattern and $\mathbf{w}$ is a $01$-word. Consider an arc $\mathbf{a}$ of $\pi$ which connects, say, $i$ and $j$, $i < j$, and let $n_0$ be the number of $0$'s in $\mathbf{w}[i,j-1]$ and $n_1$ be the number of $1$'s in $\mathbf{w}[i,j-1]$. We say that $\mathbf{a}$ is \emph{contradicting} with respect to $\mathbf{w}$ if $n_0-n_1 \equiv -1 \mod 3$, while we say that $\mathbf{a}$ is \emph{negative} if $n_0-n_1 \equiv 0 \mod 3$. The entry corresponding to $\pi,\mathbf{w}$ is now defined as (compare to \cite[Appendix~A]{QKZ} or \cite[Theorem~2.17]{Romik2014})
$$
C^{\ext}(n)_{\mathbf{w},\pi}= \begin{cases} 0, & \text{if $\pi$ contains a contradicting arc}, \\ (-1)^{\# \text{negative arcs}}, & \text{otherwise.} \end{cases}
$$
Note that if $\mathbf{w}$ equals the Dyck word of $\pi$ then we have $n_0 - n_1 = 1$ for all arcs and the entry is $1$ in this case.

Let $C(n)$ denote the square submatrix we obtain by restricting to rows indexed by Dyck words. For $n=3,4$, the matrices are displayed in Figures~\ref{P3} and \ref{P4} (link patterns are encoded through their Dyck word) where the last row and the last column should be ignored for the moment. From these examples it is obvious that many entries vanish. In the following proposition we provide necessary conditions for an entry to be non-zero. The proposition also implies that there exists an ordering of rows and columns such that the matrix is triangular with $1$'s along the diagonal and this implies that the matrix is invertible. 

In the proposition, we denote by $\path(\mathbf{w})$ the lattice path associated with the $01$-word $\mathbf{w}$ starting at the origin where $0$'s correspond to $(1,1)$-steps and $1$'s correspond to $(1,-1)$-steps. By abuse of notation, 
we denote by $\path(\pi)$ the path of the Dyck word corresponding to $\pi$. 

\begin{figure}
\begin{tabular}{c|ccccc||c}
& 000111 & 001011 & 001101 & 010011 & 010101 \\ \hline
000111 & 1 & 0 & 0 & 0 & 0 & $\mathbf{1}$ \\
001011 & 0 & 1 & 0 & 0 & 0 & $\mathbf{2}$ \\
001101 & 0 & 0 & 1 & 0 & 0 & $\mathbf{1}$ \\
010011 & 1 & 0 & 0 & 1 & 0 & $\mathbf{2}$ \\
010101 & 0 & 0 & 0 & 0 & 1 & $\mathbf{2}$ \\ \hline \hline
             & $\mathbf{1}$ & $\mathbf{2}$ & $\mathbf{1}$ & $\mathbf{1}$ & $\mathbf{2}$  &
\end{tabular}

\caption{\label{P3} The matrix $C(3)$}
\end{figure}

\begin{figure}
\begin{tabular}{c|cccccccccccccc||c}
 00001111 & 1 & 0 & 0 & 0 & 0 & 0 & 0 & 0 & 0 & 0 & 0 & 0 & 0 & 0 &  $\mathbf{1}$ \\
00010111 & 0 & 1  & 0 & 0 & 0 & 0 & 0 & 0 & 0 & 0 & 0 & 0 & 0 & 0  & $\mathbf{3}$ \\ 
00011011 & 0 & 0  & 1 & 0 & 0 & 0 & 0 & 0 & 0 & 0 & 0 & 0 & 0 & 0  & $\mathbf{3}$ \\ 
00100111 & 1 & 0  & 0 & 1 & 0 & 0 & 0 & 0 & 0 & 0 & 0 & 0 & 0 & 0  & $\mathbf{4}$ \\ 
00101011 & 0 & 0  & 0 & 0 & 1 & 0 & 0 & 0 & 0 & 0 & 0 & 0 & 0 & 0  & $\mathbf{7}$ \\ 
00011101 & 0 & 0  & 0 & 0 & 0 & 1 & 0 & 0 & 0 & 0 & 0 & 0 & 0 & 0  & $\mathbf{1}$\\ 
00101101 & 0 & 0  & 0 & 0 & 0 & 0 & 1 & 0 & 0 & 0 & 0 & 0 & 0 & 0  & $\mathbf{3}$ \\ 
00110011 & 0 & 0  & 0 & 1 & 0 & 0 & 0 & 1 & 0 & 0 & 0 & 0 & 0 & 0  & $\mathbf{4}$ \\ 
00110101 & 0 & 0  & 0 & 0 & 0 & 0 & 0 & 0 & 1 & 0 & 0 & 0 & 0 & 0  & $\mathbf{3}$ \\ 
01000111 & -1 & 1  & 0 & 0 & 0 & 0 & 0 & 0 & 0 & 1 & 0 & 0 & 0 & 0  & $\mathbf{3}$ \\ 
01001011 & 1 & 0 & 1 & 0 & 0 & 0 & 0 & 0 & 0 & 0 & 1 & 0 & 0 & 0 & $\mathbf{7}$ \\
01001101 &  0 & 0 & 0 & 0 & 0  & 1 & 0 & 0 & 0 & 0 & 0 & 1 & 0 & 0 & $\mathbf{4}$ \\
01010011 & 0 & 1 & 0 & 0 & 0 & 0 & 0 & 0 & 0 & 1 & 0 & 0 & 1 & 0 & $\mathbf{7}$ \\
01010101&   0 & 0 & 0 & 0 & 0 & 0 & 0 & 0 & 0 & 0 & 0 & 0 & 0  & 1 & $\mathbf{7}$ \\ \hline \hline
 & $\mathbf{1}$ &  $\mathbf{3}$ &  $\mathbf{3}$ &  $\mathbf{3}$ &  
 $\mathbf{7}$ &  $\mathbf{1}$ &  $\mathbf{3}$ &  $\mathbf{1}$ & 
 $\mathbf{3}$ &  $\mathbf{1}$ &  $\mathbf{3}$ &  $\mathbf{3}$ & 
 $\mathbf{3}$ &  $\mathbf{7}$
\end{tabular}
\caption{\label{P4} The matrix $C(4)$; column ordering equals row ordering}
\end{figure}

\begin{prop}
Let $\pi$ be a link pattern of size $n$ and $\mathbf{w}$ be a $01$-word of length $2 n$ and suppose $C^{\ext}(n)_{
\mathbf{w},\pi} \not= 0$. Let 
$1 \le i < j \le 2n$ such that $\pi$ induces a link pattern $\pi'$ on $\{i,i+1,\ldots,j\}$. Then the following two conditions must be satisfied.
\begin{enumerate}
\item The subword $\mathbf{w}[i,j]$ contains at least as many $0$'s as $1$'s.
\item Suppose $d \ge 0$ is the difference between the number of $0$'s and $1$'s in $\mathbf{w}[i,j]$. Then the path $\mathbf{path}(\mathbf{w}[i,j])-d$
lies below the path $\path(\pi')$ where ``$-d$'' indicates that the path was shifted $d$ unit steps downwards.
\end{enumerate}
\end{prop}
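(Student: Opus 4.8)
The plan is to translate both conditions into a single family of inequalities about lattice‑path heights and then to prove that family by induction on the size of the window $[i,j]$. Write $H(p)$ for the height of $\path(\mathbf{w})$ after $p$ steps, so that for $q<p$ the difference $H(p)-H(q)$ is the number of $0$'s minus the number of $1$'s in $\mathbf{w}[q+1,p]$. In particular the quantity $n_0-n_1$ attached to an arc $(a,b)$ equals $H(b-1)-H(a-1)$, and $\mathbf{a}$ being non‑contradicting means $H(b-1)-H(a-1)\not\equiv -1\pmod 3$. For a window $[i,j]$ that is a union of arcs, let $A(p)$ denote the number of arcs of $\pi$ with $a\le p<b$ (all such arcs are automatically contained in $[i,j]$). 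A short computation shows that the height of $\path(\pi')$ after $p-i+1$ steps equals $A(p)$, and that shifting $\path(\mathbf{w}[i,j])$ down by $d=H(j)-H(i-1)$ makes its height after $p-i+1$ steps equal $H(p)-H(j)$. Hence condition~(2) is exactly
\[
H(p)-H(j)\le A(p)\qquad (i-1\le p\le j),
\]
and condition~(1), i.e.\ $d\ge 0$, is precisely the instance $p=i-1$ of this inequality, since $A(i-1)=0$. So it suffices to prove the displayed inequalities, and I would do this by induction on the number of arcs of $\pi'$, the empty window being trivial.

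For the inductive step I would use the first‑return decomposition of the non‑crossing matching $\pi'$: let $k$ be the partner of $i$, so that $[i+1,k-1]$ and $[k+1,j]$ are again unions of arcs, carrying induced matchings $\pi'_1,\pi'_2$. Since every arc inside these sub‑windows is an arc of $\pi$, hence non‑contradicting, the induction hypothesis applies and yields the right‑anchored dominations $H(p)-H(k-1)\le A_1(p)$ on $[i,k-1]$ and $H(p)-H(j)\le A_2(p)$ on $[k,j]$, where $A_1,A_2$ count the straddling arcs of $\pi'_1,\pi'_2$. Their left‑endpoint cases give $H(i)\le H(k-1)$ and $H(k)\le H(j)$.

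The two easy ranges of $p$ then fall out immediately. For $p\ge k$ one has $A(p)=A_2(p)$, so the desired inequality is exactly the induction hypothesis for $\pi'_2$. For $i\le p\le k-1$ one has $A(p)=1+A_1(p)$, so it is enough to check $H(k-1)-H(j)\le 1$; this follows from $H(k)\le H(j)$ together with $H(k-1)=H(k)-(\text{$k$-th step})\le H(k)+1$, after which adding $H(p)-H(k-1)\le A_1(p)$ gives $H(p)-H(j)\le 1+A_1(p)=A(p)$.

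The remaining case $p=i-1$ (equivalently condition~(1)) is the crux, and here I expect the decisive ingredient to be a parity‑plus‑congruence argument on the value $v:=H(k-1)-H(i-1)$ attached to the arc $(i,k)$. From $H(i)\le H(k-1)$ and the fact that the step at position $i$ is $\pm1$ one gets $v\ge -1$; because the interior $[i+1,k-1]$ is a union of arcs, $k-i$ is odd, so $v$ is odd; and non‑contradiction of $(i,k)$ excludes $v\equiv -1\pmod 3$, in particular $v\neq -1$. These three facts force $v\ge 1$. Consequently $H(k)-H(i-1)=v+(\text{$k$-th step})\ge 1-1=0$, so $H(i-1)\le H(k)\le H(j)$, which is exactly the $p=i-1$ inequality. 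The essential difficulty, then, is isolating this single obstruction: the mod‑$3$ hypothesis is used only to rule out $v=-1$, and it is the \emph{oddness} of $v$ — a purely combinatorial property of arcs in a matching — that upgrades the cheap bound $v\ge -1$ to the bound $v\ge 1$ one actually needs.
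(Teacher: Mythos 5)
Your proof is correct, and every step checks out, but it is organized genuinely differently from the paper's. The paper proves the two assertions by two separate inductions on $j-i$: assertion (1) first (the second induction needs it, e.g.\ in the case $(\mathbf{w}_i,\mathbf{w}_j)=(0,0)$, to know the interior shift $d-2$ is admissible), then assertion (2) with a dichotomy between $i,j$ not joined (concatenation at some intermediate $k$) and $i,j$ joined, the latter requiring four cases according to $(\mathbf{w}_i,\mathbf{w}_j)$. You instead encode both assertions as the single family of height inequalities $H(p)-H(j)\le A(p)$ for $i-1\le p\le j$, with (1) appearing as the endpoint instance $p=i-1$, and run one induction on the number of arcs via the first-return decomposition at the partner $k$ of $i$. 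Your parity observation --- $v=H(k-1)-H(i-1)$ is odd because the interior of the arc is perfectly matched --- replaces the paper's case analysis and isolates exactly where the congruence enters: the mod-$3$ condition is used once, to exclude $v=-1$. What your version buys is economy (no separate proof of (1), no four-case split, a transparent accounting of the hypothesis); what the paper's buys is that it argues directly on the words without translating to heights. One wording caution: your parenthetical claim that every arc of $\pi$ with $a\le p<b$ lies in $[i,j]$ is only true when $a\ge i$; arcs of $\pi$ nested over the whole window, with $a<i\le j<b$, also straddle $p$ but must be excluded from $A(p)$ --- as your recursions $A(p)=1+A_1(p)$ and $A(p)=A_2(p)$ implicitly do, so the slip does not propagate once $A(p)$ is read as the number of straddling arcs of $\pi'$.
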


\begin{proof}
We start with the first assertion and use induction with respect to $j-i$. If $j=i+1$ then $\mathbf{w}_i=0$ because otherwise the arc connecting 
$i$ to $i+1$ is contradicting. Otherwise the assertion follows from the induction hypothesis if  
$i$ and $j$ are not connected by an arc in $\pi$. If $i$ and $j$ are connected by an arc the induction hypothesis implies that 
$\mathbf{w}[i+1,j-1]$ contains at  least as many $0$'s as $1$'s.
If $\mathbf{w}[i,j]$ has less $0$'s than $1$'s then  
$\mathbf{w}[i+1,j-1]$  has precisely as many $0$'s and $1$'s for otherwise $\mathbf{w}[i+1,j-1]$ contains at least 
two more $0$'s than $1$'s as the length of the subword is even.
But this implies $\mathbf{w}_i=0$, for otherwise the
arc of $\pi$ connecting $i$ and $j$ must be contradicting. 

Also for the second assertion we use induction with respect to $j-i$ and observe that it is obvious if $j=i+1$ since
$\mathbf{w}_i=0$ in this case: namely, if also $\mathbf{w}_j=0$ then $d=2$ and $\path(\mathbf{w}[i,j])-2$ is below the $x$-axis and thus below 
$\path(\pi')$, while $d=0$ and $\path(\mathbf{w}[i,j])=\path(\pi')$ if $\mathbf{w}_j=1$.

Otherwise we first assume that $i$ and $j$ are not connected by an arc and choose $k$, $i < k < j$, such that 
$\pi$ induces link patterns $\pi_1,\pi_2$ on $i,i+1,\ldots,k$ and on $k+1,k+2,\ldots,j$, respectively. Let $d_1$ and $d_2$ be the difference between the number of 
$0$'s and $1$'s in $\mathbf{w}[i,k]$ and $\mathbf{w}[k+1,j]$, respectively. By the induction hypothesis we know that 
$\path(\mathbf{w}[i,k])-d_1$ lies below $\path(\pi_1)$ and $\path(\mathbf{w}[k+1,j])-d_2$ lies below 
$\path(\pi_2)$. Now the assertion follows since the restriction of $\path(\mathbf{w}[i,j])-d$ to the first 
$k-i+1$ letters is below $\path(\mathbf{w}[i,k])-d_1$ since $d_1 \le d$, while the restriction 
of $\path(\mathbf{w}[i,j])-d$ to the last
$j-k$ letters equals $\path(\mathbf{w}[k+1,j])-d_2$ since $-d+d_1=d_2$. 

Thus we may assume that $i$ and $j$ are connected by an arc and 
denote by $\pi''$ the link pattern on $\{i+1,i+2,\ldots,j-1\}$. We have to distinguish cases according to the  
four possible choices for $(\mathbf{w}_i,\mathbf{w}_j)$. 

{\it Case} $(\mathbf{w}_i,\mathbf{w}_j)=(0,0)$: Then  
$\path(w[i+1,j-1])-d+2$ lies below $\path(\pi'')$ by the induction hypothesis which implies that 
$\path(w[i,j])-d$ is at least two units below $\path(\pi)$ except for possibly the last step.

{\it Case} $(\mathbf{w}_i,\mathbf{w}_j)=(0,1)$: Here $\path(w[i+1,j-1])-d$ lies below $\path(\pi'')$ and the assertion follows immediately.

{\it Case} $(\mathbf{w}_i,\mathbf{w}_j)=(1,0)$: The path $\path(w[i+1,j-1])-d$ lies below $\path(\pi'')$ and this implies that
$\path(w[i,j])-d$ is at least two units below $\path(\pi)$ except for possibly the first and the last step.

{\it Case}  $(\mathbf{w}_i,\mathbf{w}_j)=(1,1)$: The path $\path(w[i+1,j-1])-d-2$ lies below $\path(\pi'')$ and since $\path(w[i,j])-d$ starts with a downstep and 
$\path(\pi')$ with an upstep, the $y$-coordinates of the second lattice points of the two paths differs by $d+2$, which implies the assertion also in this case.
\end{proof}

The set of Dyck words of length $2n$ is denoted by $\mathcal{D}_n$ and the set of link patterns of 
size $n$ is denoted by $\lp(n)$.
The following was proven in \cite{QKZ}
\begin{theo} For any positive integer $n$, we have 
$$
(\psi_{\pi})_{\pi \in \lp(n)} = C(n)^{-1} (\Psi_{\alpha(\mathbf{w})})_{\mathbf{w} \in \mathcal{D}_n}.
$$
\end{theo}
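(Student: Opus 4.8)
The plan is to reproduce the Di Francesco--Zinn-Justin argument, which realizes both sides of the claimed equality inside the theory of the polynomial solution of the level-one quantum Knizhnik--Zamolodchikov (qKZ) equation at the combinatorial point $q=e^{i\pi/3}$, where the loop fugacity $\tau=-q-q^{-1}$ equals $1$. Write $\Psi(z_1,\ldots,z_{2n})=\sum_{\pi\in\lp(n)}\psi_\pi(z_1,\ldots,z_{2n})\,|\pi\rangle$ for the vector-valued polynomial solution expressed in the Temperley--Lieb link-pattern basis. The first step is to recall that this solution exists, is unique up to normalization (it is pinned down by the qKZ exchange relations, a recursion lowering $n$ to $n-1$ upon specializing neighbouring spectral parameters, polynomiality, and a degree bound), and that in the homogeneous limit $z_1=\cdots=z_{2n}=1$ its components recover the fully packed loop numbers, $\psi_\pi(1,\ldots,1)=\psi_\pi$. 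This last identification is the Razumov--Stroganov correspondence, and it is precisely what ties the purely combinatorial FPL counts to the analytic object $\Psi$.

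The second step is to pass from the link-pattern basis to the spin (that is, $01$-word) basis. Each link state $|\pi\rangle$ expands as $\sum_{\mathbf{w}}M_{\mathbf{w},\pi}\,|\mathbf{w}\rangle$, where the coefficient records how the arcs of $\pi$ are decorated by the word $\mathbf{w}$, each cap contributing a weight that is a power of $q^2$. At the combinatorial point $q^2$ is a primitive cube root of unity, so these weights collapse to the three residues modulo $3$ appearing in the definition of $C^{\ext}(n)$: an arc spanning a subword with $n_0-n_1\equiv-1$ forces a vanishing contribution (a contradicting arc), an arc with $n_0-n_1\equiv0$ carries a sign (a negative arc), and the remaining case contributes $+1$. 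Matching this bookkeeping shows $M=C^{\ext}(n)$, so that the spin-basis component indexed by $\mathbf{w}$ satisfies $\langle\mathbf{w}\,|\,\Psi\rangle=\sum_{\pi}C^{\ext}(n)_{\mathbf{w},\pi}\,\psi_\pi$ in the homogeneous limit.

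The third and decisive step is to identify, for a \emph{Dyck} word $\mathbf{w}$, the spin-basis component $\langle\mathbf{w}\,|\,\Psi\rangle$ with $\Psi_{\alpha(\mathbf{w})}$, i.e.\ with the coefficient of $X_1^{\alpha_1}\cdots X_{n-1}^{\alpha_{n-1}}$ in \eqref{psi}. Here one invokes the explicit Di Francesco--Zinn-Justin solution of the qKZ equation: the spin components are given by specializations of the extended Joseph polynomials (equivalently, multidegrees of the associated matrix Schubert varieties), which admit Schur- and Vandermonde-type expressions. Taking the homogeneous limit of these expressions and reorganizing them as a generating function reproduces exactly the product \eqref{psi}, with the exponent dictionary supplied by $\alpha(\mathbf{w})$; this is the same polynomial that Theorem~\ref{main} attaches to ASTs, which is what ultimately makes the FPL numbers expressible through AST data.

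Finally, restricting $\mathbf{w}$ to Dyck words turns the transition matrix into the square matrix $C(n)$. The preceding Proposition shows that, in a suitable ordering of rows and columns, $C(n)$ is triangular with $1$'s on the diagonal, hence invertible; combining the three steps gives $(\Psi_{\alpha(\mathbf{w})})_{\mathbf{w}\in\mathcal{D}_n}=C(n)\,(\psi_\pi)_{\pi\in\lp(n)}$, and multiplication by $C(n)^{-1}$ yields the statement. I expect the third step to be the main obstacle: the first two are structural and combinatorial bookkeeping, whereas identifying the homogeneous limit of the qKZ spin components with the explicit polynomial \eqref{psi} is the analytic heart of the Di Francesco--Zinn-Justin theory and cannot be shortcut without their solution of the qKZ equation.
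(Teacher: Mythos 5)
The paper itself gives no proof of this theorem---it is quoted directly from Di Francesco and Zinn-Justin \cite{QKZ} (with the identification of the loop-model components with actual FPL numbers resting on the Razumov--Stroganov conjecture, proven by Cantini and Sportiello \cite{CanSportProof}, as the paper notes)---and your sketch is a faithful reconstruction of exactly that cited argument: the qKZ solution at the combinatorial point, the change of basis from link patterns to the spin basis producing the mod-$3$ rules defining $C^{\ext}(n)$, the identification of the Dyck-word spin components in the homogeneous limit with the coefficients $\Psi_{\alpha(\mathbf{w})}$ of \eqref{psi}, and the inversion of $C(n)$ justified by the triangularity established in the preceding proposition. The only blemish is a convention slip: $q=e^{i\pi/3}$ gives $\tau=-q-q^{-1}=-1$, whereas the combinatorial point is $q=e^{2\pi i/3}$ (equivalently $q=-e^{i\pi/3}$), where $\tau=1$.
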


To illustrate this with the help of an example, observe that the $\Psi_{\alpha(\mathbf{w})}$ are displayed in the last columns in Figures~\ref{P3} and \ref{P4}, while the $\psi_{\pi}$ are displayed in the last row in these figures. It is interesting to note that this formula for the $\psi_{\pi}$ was not ``directly'' derived from working with FPLs, but by exploring the system of linear equations for these numbers that is provided by the Razumov-Stroganov-Cantini-Sportiello theorem \cite{CanSportProof}, while the formula in Theorem~\ref{main} was directly derived from ASTs.

\section{Future work}

It seems that Theorem~\ref{rogerrefined} can be generalized in several exciting directions as was first discovered empirically again by Behrend. These generalizations will be studied in joint papers with Behrend. More concretely, additional statistics can be added. For an ASM $A$, let $T_L(A)$ be the maximal number of SW-NE all-$0$ diagonals starting from the top-left corner and let $T_R(A)$ be the maximal number of NW-SE all-$0$ diagonals starting from the top-right corner. (In terms of monotone triangles, ASMs with prescribed statistic $T_L$ correspond to Gog trapezoids, while ASMs with prescribed statistics $T_L$ and $T_R$ correspond to Gog pentagons.) For an AST $A$, let $T_L(A)$ be the maximal number of all-$0$ columns starting from the left end of $A$ and let $T_R(A)$ be the maximal number of all $0$-columns starting from the right end of $A$.  Moreover, $\mu(A)$ denotes the number of $-1$'s, while $\inv(A)$ denotes the inversion number if $A$ is an ASM or an AST. Then the joint distributions of $T_L,T_R,\mu,\inv$ are the same on the sets of order $n$ ASMs and order $n$ ASTs. This generalizes Theorem 5.5 from \cite{EXTREMEDASASM}. Second, the joint distribution of $T_L$ and $\rho$ on the set of order $n$ ASTs is the same as the joint distribution of $T_L$ and the column of the $1$ in the top row of the ASM on the set of order $n$ ASMs, which generalizes Theorem~\ref{rogerrefined}.

\section{Acknowledgement} 

I thank Roger Behrend for helpful discussions and sharing the above mentioned conjectures with me.


\begin{thebibliography}{BDFZJ13}

\bibitem[ABF]{EXTREMEDASASM}
A.~Ayyer, R.~E. Behrend, and I.~Fischer.
\newblock Extreme diagonally and antidiagonally symmetric alternating sign
  matrices of odd order.
\newblock Preprint, arXiv:1611.03823.

\bibitem[Aig17]{AignerFPSAC17}
F.~Aigner.
\newblock {Refined enumerations of alternating sign triangles}.
\newblock {\em S\'em. Lothar. Combin.}, 2017.
\newblock Proc. FPSAC~17.

\bibitem[BC13]{cheballah2}
P.~Biane and H.~Cheballah.
\newblock Gog, {M}agog and {S}ch\"utzenberger {II}: left trapezoids.
\newblock In {\em 25th {I}nternational {C}onference on {F}ormal {P}ower
  {S}eries and {A}lgebraic {C}ombinatorics ({FPSAC} 2013)}, Discrete Math.
  Theor. Comput. Sci. Proc., AS, pages 349--360. Assoc. Discrete Math. Theor.
  Comput. Sci., Nancy, 2013.

\bibitem[BDFZJ12]{DPP1Behrend}
R.~E. Behrend, P.~Di~Francesco, and P.~Zinn-Justin.
\newblock On the weighted enumeration of alternating sign matrices and
  descending plane partitions.
\newblock {\em J. Combin. Theory Ser. A}, 119(2):331--363, 2012.

\bibitem[BDFZJ13]{DPP2Behrend}
R.~E. Behrend, P.~Di~Francesco, and P.~Zinn-Justin.
\newblock A doubly-refined enumeration of alternating sign matrices and
  descending plane partitions.
\newblock {\em J. Combin. Theory Ser. A}, 120(2):409--432, 2013.

\bibitem[Bet16]{betti}
J.~Bettinelli.
\newblock {A simple explicit bijection between $(n,2)$-Gog and
  Magog-trapezoids}.
\newblock {\em S\'em. Lothar. Combin.}, 75:Article B75e, 2016.

\bibitem[BF]{BehrendFischer}
R.~E. Behrend and I.~Fischer.
\newblock Alternating sign trapezoids and cyclically symmetric lozenge tilings
  of hexagons with a central triangular hole.
\newblock In preparation.

\bibitem[Bre99]{BressoudPandC}
D.~Bressoud.
\newblock {\em {Proofs and Confirmations: The Story of the Alternating Sign
  Matrix Conjecture}}.
\newblock Cambridge: Mathematical Association of America/Cambridge University
  Press, 1999.

\bibitem[CB12]{cheballah1}
H.~Cheballah and P.~Biane.
\newblock Gog and {M}agog triangles, and the {S}ch\"utzenberger involution.
\newblock {\em S\'em. Lothar. Combin.}, 66:Art. B66d, 20, 2011/12.

\bibitem[CS11]{CanSportProof}
L.~Cantini and A.~Sportiello.
\newblock {Proof of the Razumov-Stroganov conjecture}.
\newblock {\em J. Combin. Theory Ser. A}, 118:1549--1574, 2011.

\bibitem[DFZJ05]{sumrule}
P.~Di~Francesco and P.~Zinn-Justin.
\newblock Around the {R}azumov-{S}troganov conjecture: proof of a
  multi-parameter sum rule.
\newblock {\em Electron. J. Combin.}, 12:Research Paper 6, 27, 2005.

\bibitem[Fis06]{FischerNumberOfMT}
I.~Fischer.
\newblock {The number of monotone triangles with prescribed bottom row}.
\newblock {\em Adv. Appl. Math.}, 37:249--267, 2006.

\bibitem[Fis07]{FischerNewRefProof}
I.~Fischer.
\newblock {A new proof of the refined alternating sign matrix theorem}.
\newblock {\em J. Comb. Theory Ser. A}, 114:253--264, 2007.

\bibitem[Fis10]{FischerSimplifiedProof}
I.~Fischer.
\newblock {The operator formula for monotone triangles - simplified proof and
  three generalizations}.
\newblock {\em J. Comb. Theory. Ser. A}, 119:1143, 2010.

\bibitem[Fis11]{FischerRefEnumASM}
I.~Fischer.
\newblock {Refined enumerations of alternating sign matrices: monotone
  $(d,m)$-trapezoids with prescribed top and bottom row}.
\newblock {\em J. Alg. Combin.}, 33:239 -- 257, 2011.

\bibitem[Fis16]{FischerASMProof2015}
I.~Fischer.
\newblock {Short proof of the ASM theorem avoiding the six-vertex model}.
\newblock {\em J. Comb. Theory Ser. A}, 144:139--156, 2016.

\bibitem[FZJ08]{FonZinn}
T.~Fonseca and P.~Zinn-Justin.
\newblock {On the doubly refined enumeration of alternating sign matrices and
  totally symmetric self-complementary plane partitions}.
\newblock {\em Elect. J. Combin.}, 15:35pp, 2008.

\bibitem[GV85]{GesselViennot}
I.~Gessel and G.~X. Viennot.
\newblock Binomial determinants, paths, and hook length formulae.
\newblock {\em Adv. in Math.}, 58(3):300--321, 1985.

\bibitem[Kra96]{KrattGogMagog}
C.~Krattenthaler.
\newblock {A Gog-Magog Conjecture}.
\newblock \url{http://www.mat.univie.ac.at/\~kratt/artikel/magog.html}, 1996.

\bibitem[Kup02]{KuperbergRoof}
G.~Kuperberg.
\newblock Symmetry classes of alternating-sign matrices under one roof.
\newblock {\em Ann. of Math. (2)}, 156(3):835--866, 2002.

\bibitem[Lin73]{Lindstr}
B.~Lindstr\"om.
\newblock On the vector representations of induced matroids.
\newblock {\em Bull. London Math. Soc.}, 5:85--90, 1973.

\bibitem[MRR82]{MilRobRum82}
W.H. Mills, D.~P. Robbins, and H.C. Rumsey, Jr.
\newblock Proof of the {M}acdonald conjecture.
\newblock {\em Invent. Math.}, 66(1):73--87, 1982.

\bibitem[MRR83]{DPPMRR}
W.~H. Mills, D.~P. Robbins, and H.C. Rumsey, Jr.
\newblock Alternating sign matrices and descending plane partitions.
\newblock {\em J. Combin. Theory Ser. A}, 34(3):340--359, 1983.

\bibitem[MRR86]{MilRobRum86}
W.H. Mills, D.~P. Robbins, and H.C. Rumsey, Jr.
\newblock Self-complementary totally symmetric plane partitions.
\newblock {\em J. Combin. Theory Ser. A}, 42(2):277--292, 1986.

\bibitem[Pro01]{ProppFaces}
J.~Propp.
\newblock The many faces of alternating-sign matrices.
\newblock In {\em Discrete models: combinatorics, computation, and geometry
  ({P}aris, 2001)}, Discrete Math. Theor. Comput. Sci. Proc., AA, pages
  043--058. Maison Inform. Math. Discr\`et. (MIMD), Paris, 2001.

\bibitem[Rom14]{Romik2014}
D.~Romik.
\newblock {Connectivity Patterns in Loop Percolation I: the Rationality
  Phenomenon and Constant Term Identities}.
\newblock {\em Comm. Math. Phys.}, 330:499--538, 2014.

\bibitem[RR86]{RR86}
D.~P. Robbins and H.C. Rumsey, Jr.
\newblock Determinants and alternating sign matrices.
\newblock {\em Adv. Math.}, 62(2):169--184, 1986.

\bibitem[Wie00]{wieland}
B.~Wieland.
\newblock A large dihedral symmetry of the set of alternating sign matrices.
\newblock {\em Electron. J. Combin.}, 7:Research Paper 37, 13, 2000.

\bibitem[Zei96a]{ZeilbergerASMProof}
D.~Zeilberger.
\newblock {Proof of the alternating sign matrix conjecture}.
\newblock {\em Electron. J. Comb.}, 3:1--84, 1996.

\bibitem[Zei96b]{ZeilbergerRefinedASMProof}
D.~Zeilberger.
\newblock {Proof of the refined alternating sign matrix conjecture}.
\newblock {\em New York J. Math.}, 2:59--68, 1996.

\bibitem[ZJ09]{HabilZinnJustin}
P.~Zinn-Justin.
\newblock Six-vertex, loop and tiling models: integrability and combinatorics.
\newblock 2009.

\bibitem[ZJDF08]{QKZ}
P.~Zinn-Justin and P.~Di~Francesco.
\newblock The quantum {K}nizhnik-{Z}amolodchikov equation, completely symmetric
  self-complementary plane partitions, and alternating-sign matrices.
\newblock {\em Teoret. Mat. Fiz.}, 154(3):387--408, 2008.

\end{thebibliography}
\end{document}